\newtheorem{theorem}{Theorem}[section]
\newtheorem{lemma}[theorem]{Lemma}
\newtheorem{corollary}[theorem]{Corollary}
\theoremstyle{definition}
\newtheorem{example}[theorem]{Example}
\theoremstyle{remark}
\newtheorem{remark}[theorem]{Remark}
\begin{document}
\bigskip \title{\Large {\bf Linear combinations of univalent harmonic mappings convex in the direction of the imaginary axis}}
\author{{Raj Kumar\,\thanks{e-mail: rajgarg2012@yahoo.co.in},\, Sushma Gupta ~and~ Sukhjit Singh }}
\markboth{\small{Raj Kumar, Sushma Gupta ~and~ Sukhjit Singh }}{\small{Linear combination of univalent harmonic mappings}}
\date{}
\maketitle
{\footnotesize
{\bf Abstract.} In the present paper, we introduce a family of univalent harmonic functions, which map the unit disk onto domains convex in the direction of the imaginary axis. We find conditions for the linear combinations of mappings from this family to be univalent and convex in the direction of the imaginary axis. Linear combinations of functions from this family and harmonic mappings obtained by shearing of analytic vertical strip maps are also studied.

\footnotetext[1]{\emph{2010 AMS Subject Classification}: 58E20}
\footnotetext[2]{\emph{Key Words and Phrases}: Harmonic mapping, linear combination, convex in the direction of the imaginary axis.}}

\section{Introduction}
 A complex-valued continuous function $f = u+iv$ is said to be harmonic in the open unit disk  $E = \{z: |z|<1\}$ if both $u$ and $v$ are real-valued harmonic functions in $E$. Such harmonic mappings have canonical decomposition $f=h+\overline g$, were $h$ is known as the analytic and $g$ the co-analytic part of $f$. A harmonic mapping $f=h+\overline g$ defined in $E$, is locally univalent and sense-preserving if and only if $h'\not=0$ in $E$ and the dilatation function $\omega,$ defined by $\displaystyle\omega=g'/h'$, satisfies $|\omega|<1$ in $E$. The class of all harmonic, univalent and sense-preserving mappings $f=h+\overline g$ in $E$ and normalized by the conditions $ f(0)=0 $ and $f_{z}(0)=1$ is denoted by $S_H$. Therefore, a function $f=h+\overline g$ in the class $S_H$ has the representation,
\begin{equation} f(z) = z+ \sum _{n=2}^{\infty} a_nz^n + \sum _{n=1}^{\infty}\overline{ b}{_n}\overline{z}^n, \end{equation} for all $z$ in $E$. The class of functions of the type (1) with $b_1=f_{\overline{z}}(0) = 0$ is a subclass of $S_H$ and will be denoted here by $S_H^0$. \\
\indent A domain $\Omega$ is said to be convex in a direction $\phi,\, 0 \leq \phi < \pi,$ if every line parallel to the line through $0$ and $ e ^{i \phi}$ has either connected or empty intersection with $\Omega$. The following result due to Hengartner and Schober [\ref{he and sc}] is very useful to check the convexity of an analytic function in the direction of the imaginary axis.
\begin{lemma} Suppose $f$ is analytic and non constant in $E$. Then $$\Re[(1-z^2)f'(z)]\geq0,\,\,z\in E$$ if and only if\\(i) $f$ is univalent in $E$;\\(ii) $f$ is convex in the direction of the imaginary axis;\\ (iii) there exist sequences $\{z_n'\}$ and $\{z_n''\}$ converging to $z=1$ and $z=-1$, respectively, such that $$lim_{n\rightarrow \infty} \,\,\Re(f(z_n'))= sup_{|z|<1}\,\,\Re(f(z)),$$ \begin{equation} \end{equation} $$ lim_{n\rightarrow \infty} \,\,\Re(f(z_n''))= inf_{|z|<1}\,\,\Re(f(z)). $$
\end{lemma}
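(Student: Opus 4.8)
This is the criterion of Hengartner and Schober, and the argument rests on a single computational identity. For a boundary point $z=e^{i\theta}$ at which $f$ is suitably regular, $1-e^{2i\theta}=-2i\sin\theta\,e^{i\theta}$ gives
$$\Re\big[(1-z^{2})f'(z)\big]\big|_{z=e^{i\theta}}=-2\sin\theta\;\frac{\partial}{\partial\theta}\,\Re f(e^{i\theta}).$$
Hence the inequality $\Re[(1-z^{2})f'(z)]\ge0$, read on the circle, says exactly that $\theta\mapsto\Re f(e^{i\theta})$ is non-increasing on $(0,\pi)$ and non-decreasing on $(-\pi,0)$; geometrically, that $\Re f$ climbs monotonically from the prime end over $z=-1$ to the prime end over $z=1$ along each boundary arc. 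The plan is to convert this equivalence into the statements (i)--(iii) and back.

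\emph{Positivity $\Rightarrow$ (i)--(iii).} The analytic function $p(z)=(1-z^{2})f'(z)$ has non-negative real part, hence (Herglotz representation) belongs to $H^{q}$ for all $q<1$ and has non-tangential boundary values a.e.; likewise $f\in H^{q}$ for $q<1/2$ and $f'\in H^{q}$ for $q<1/3$, so the displayed identity is legitimate a.e. on $\partial E$ and yields the monotonicity of $\theta\mapsto\Re f(e^{i\theta})$ on each arc. That monotonicity forces $\sup_{|z|<1}\Re f$ to be approached as $z\to1$ and $\inf_{|z|<1}\Re f$ as $z\to-1$, which is (iii); and it forces each vertical line $\Re w=c$ to meet $\partial\Omega$ in at most two points, one on each arc, so that $\Omega\cap\{\Re w=c\}$ is connected, which is (ii). For univalence (i) I would switch to the strip model: the conformal map $\zeta=\tfrac12\log\frac{1+z}{1-z}$ takes $E$ onto a horizontal strip $S$, and since $\mathrm d\zeta=\mathrm dz/(1-z^{2})$ the composition $F=f\circ(\text{inverse})$ satisfies $\Re F'\ge0$ on the convex set $S$; the Noshiro--Warschawski theorem (after disposing of the trivial case $F'\equiv\text{const}$ via the open mapping theorem) then gives that $F$, hence $f$, is univalent.

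\emph{(i)--(iii) $\Rightarrow$ positivity.} If $f$ maps $E$ univalently onto a domain convex in the direction of the imaginary axis with $\Re f$ attaining its supremum and infimum over $E$ at the prime ends over $z=1$ and $z=-1$, then the boundary correspondence for conformal maps forces every vertical line to cross $\partial\Omega$ once along the image of the upper arc and once along that of the lower arc; this is precisely the monotonicity of $\theta\mapsto\Re f(e^{i\theta})$ on $(0,\pi)$ and on $(-\pi,0)$, so by the identity $\Re[(1-z^{2})f'(z)]\ge0$ holds in the boundary limit. Since $\Re[(1-z^{2})f'(z)]$ is harmonic in $E$, the minimum principle (applied through the radial-limit / $H^{q}$ theory for the univalent $f$ and its derivative, so as to cover unbounded $\Omega$) propagates the inequality to all of $E$.

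The step I expect to be the main obstacle is exactly the boundary bookkeeping: when $\Omega$ is unbounded and $f$ has no continuous extension to $\partial E$, everything must be phrased via prime ends and non-tangential limits, one must know $\Re f(e^{i\theta})$ is regular enough (e.g. locally of bounded variation away from $z=\pm1$) for ``derivative has constant sign'' to imply ``monotone'', and the degenerate cases (critical points of $\Re f$, affine or strip-shaped images, $p$ an imaginary constant) need to be handled separately. Hypothesis (iii) is what makes this feasible: it singles out the two boundary points acting as the ``rightmost'' and ``leftmost'' ends of $\Omega$, without which the monotone parametrization of $\partial\Omega$ by $\Re f$ could not even be set up.
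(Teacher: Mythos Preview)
The paper does not prove this lemma; it is quoted verbatim as a known result of Hengartner and Schober (reference~[\ref{he and sc}] in the paper) and is thereafter used only as a black box. There is thus no ``paper's own proof'' to compare your attempt against.

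For what it is worth, your sketch is close in spirit to the original Hengartner--Schober argument. The boundary identity
\[
\Re\big[(1-z^{2})f'(z)\big]\big|_{z=e^{i\theta}}=-2\sin\theta\,\frac{\partial}{\partial\theta}\Re f(e^{i\theta})
\]
is indeed the hinge of their proof, and the passage to the horizontal strip via $\zeta=\tfrac12\log\frac{1+z}{1-z}$ so as to invoke Noshiro--Warschawski for univalence is the standard device. The difficulties you flag at the end---boundary regularity when $\Omega$ is unbounded, the need to phrase things through prime ends and non-tangential limits, and the degenerate case $\Re[(1-z^2)f'(z)]\equiv 0$---are real and are exactly what Hengartner and Schober spend their effort on; they circumvent most of the boundary analysis by working first on subdisks $|z|<r$ and passing to the limit. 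Your outline would become a complete proof once those details are filled in, but the paper under review neither supplies nor requires them.
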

 Construction of univalent harmonic mappings is not a very easy and straight forward task. In 1984, Clunie and Sheil-Small introduced a method, known as shear construction or shearing, for constructing a univalent harmonic mapping from a related conformal map. The following result of  Clunie and Sheil-Small [\ref{cl and sh}] is fundamental for constructing a harmonic univalent map convex in a given direction.
 \begin{lemma} A locally univalent harmonic function $f=h+\overline g$  in $E$ is a univalent harmonic mapping of $E$ onto a domain convex in a direction $\phi$ if and only if $h-e^{2i\phi}g$ is a univalent analytic mapping of $E$ onto a domain convex in the direction $\phi$.\end{lemma}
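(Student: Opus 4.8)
The plan is to follow the classical argument of Clunie and Sheil-Small, first reducing to the case $\phi=0$ (convexity in the direction of the real axis). If one sets $\tilde f=e^{-i\phi}f$, then $\tilde f=\tilde h+\overline{\tilde g}$ with $\tilde h=e^{-i\phi}h$ and $\tilde g=e^{i\phi}g$; the hypotheses (local univalence, sense-preservation, and $|\tilde g'/\tilde h'|=|g'/h'|<1$) are rotation-invariant, $\tilde f(E)=e^{-i\phi}f(E)$ is convex in the direction of the real axis iff $f(E)$ is convex in the direction $\phi$, and $\tilde h-\tilde g=e^{-i\phi}(h-e^{2i\phi}g)$, so the associated analytic map transforms correspondingly. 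Hence it suffices to prove: a locally univalent, sense-preserving harmonic $f=h+\overline g$ is univalent on $E$ with $f(E)$ convex in the direction of the real axis \emph{if and only if} $F:=h-g$ is univalent and analytic on $E$ with $F(E)$ convex in the direction of the real axis.

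Two elementary facts do the work. Writing $h=h_1+ih_2$ and $g=g_1+ig_2$, one checks $\Re f=h_1+g_1=\Re(h+g)$ and $\Im f=h_2-g_2=\Im F$, so $f(z)$ and $F(z)$ always lie on the same horizontal line. Also, since $\omega:=g'/h'$ satisfies $|\omega|<1$, we have $F'=h'(1-\omega)\neq0$ (so $F$ is at least locally univalent), and the M\"obius image $\dfrac{h'+g'}{h'-g'}=\dfrac{1+\omega}{1-\omega}$ lies in the right half-plane, i.e. $\Re\big((h'+g')/(h'-g')\big)>0$ throughout $E$.

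For each $c\in\mathbb R$ put $\gamma_c=\{z\in E:\Im F(z)=c\}=\{z\in E:\Im f(z)=c\}$. In the ``$\Rightarrow$'' direction, $F$ is a homeomorphism of $E$ onto $\Omega:=F(E)$, and by convexity $\Omega\cap\{\Im=c\}$ is an interval, so $\gamma_c=F^{-1}(\Omega\cap\{\Im=c\})$ is a single arc; in the ``$\Leftarrow$'' direction one argues symmetrically, using that the univalent continuous map $f$ is a homeomorphism onto the domain $f(E)$ and that $f(E)\cap\{\Im=c\}$ is an interval. Parametrize $\gamma_c$ by $z=z(s)$ (legitimate since $F'\neq0$ means $\Im F$ has no critical points). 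With $w=h'(z)z'$ and $v=g'(z)z'$, the identity $\frac{d}{ds}\Im\big(f(z(s))\big)=0$ forces $\Im(w-v)=0$; hence $\frac{d}{ds}F(z(s))=w-v=F'(z)z'$ is a nonzero real number of constant sign along $\gamma_c$, while
\[
\tfrac{d}{ds}\Re\!\big(f(z(s))\big)=\Re(w+v)=(w-v)\,\Re\!\Big(\tfrac{1+\omega}{1-\omega}\Big),
\]
which is nonzero with the same sign as $\frac{d}{ds}\Re F(z(s))$. Thus $\Re f$ and $\Re F$ are each strictly monotone along $\gamma_c$. Global univalence follows: if $f(z_1)=f(z_2)$ then $z_1,z_2$ lie on a common $\gamma_c$ on which $\Re f$ is strictly monotone, so $z_1=z_2$; and likewise for $F$. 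Finally $f(E)\cap\{\Im=c\}=f(\gamma_c)$ and $F(E)\cap\{\Im=c\}=F(\gamma_c)$ are connected (continuous images of the arc $\gamma_c$), so both image domains are convex in the direction of the real axis.

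I expect the real obstacle to be not the one-line derivative computation but the topological scaffolding around the fibers $\gamma_c$: showing, in each direction, that $\gamma_c$ is a \emph{single} arc — a priori a level set of a harmonic function may have several components, and it is precisely the convexity of the relevant image together with the homeomorphism property of $F$ (resp.\ $f$) that forces connectedness — and justifying the smooth parametrization and the absence of boundary pathologies. Once this is in place, the symmetric identity above transfers strict monotonicity of the real part back and forth between $f$ and $F$ along $\gamma_c$, and both univalence and the convexity of the image follow at once.
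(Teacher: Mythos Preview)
The paper does not prove this lemma at all; it is quoted from Clunie and Sheil--Small (reference~[\ref{cl and sh}]) and used as a black box. So there is no ``paper's own proof'' to compare against.

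Your sketch is essentially the classical Clunie--Sheil--Small level-set argument, and the key computations are correct: the identity $\Im f=\Im F$, the nonvanishing of $F'=h'(1-\omega)$, and the transfer of monotonicity via $\Re(w+v)=(w-v)\,\Re\bigl((1+\omega)/(1-\omega)\bigr)$ along the fibers $\gamma_c$ are all sound. One minor slip: your $\Rightarrow$ and $\Leftarrow$ labels are reversed relative to the statement as written --- in the $\Rightarrow$ direction the hypothesis is that $f$ (not $F$) is univalent with image convex in the direction $\phi$, so it is $f$ that you should use as a homeomorphism to conclude $\gamma_c$ is connected. Since the argument is fully symmetric this does not damage the proof, but you should correct the labeling. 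You are also right that the delicate point is the topology of the fibers; the combination of ``$\Im F$ has no critical points'' (so each component of $\gamma_c$ is a smooth arc) with ``$\gamma_c$ is the homeomorphic preimage of an interval'' (so it is connected) handles this cleanly.
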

Another way of constructing desired univalent harmonic maps is by taking the linear combination of two suitable harmonic maps. For example in the following result Dorff [\ref{do 2}] identified two suitable harmonic functions $f_1$ and $f_2$ whose linear combination is univalent and convex in the direction of the imaginary axis.
\begin{theorem} Let $f_1=h_1+\overline{g}_{1}$ and $f_2=h_2+\overline{g}_{2}$ be two univalent harmonic mappings convex in the direction of the imaginary axis with $\omega_1=\omega_2$, where $\displaystyle \omega_1=g_1'/h_1'$ and $\displaystyle \omega_2=g_2'/h_2'$ are dilatation functions of $f_1$ and $f_2$ respectively. If $f_1$ and $f_2$ satisfy the conditions (2) above, then $f_3=t f_1+(1-t)f_2,\, 0\leq t\leq1$, is univalent and convex in the direction of the imaginary axis. \end{theorem}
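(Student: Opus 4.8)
The plan is to transfer the statement about the harmonic maps $f_1,f_2,f_3$ to the associated sheared analytic maps and then quote Lemmas 1.1 and 1.2. Write $h_3=th_1+(1-t)h_2$ and $g_3=tg_1+(1-t)g_2$, so $f_3=h_3+\overline g_3$. Since convexity in the direction of the imaginary axis is the case $\phi=\pi/2$ of Lemma 1.2, so that $e^{2i\phi}=-1$, put $F_j=h_j+g_j$ for $j=1,2,3$; then $F_3=tF_1+(1-t)F_2$, and by Lemma 1.2 the hypotheses on $f_1,f_2$ say exactly that $F_1$ and $F_2$ are univalent analytic maps of $E$ onto domains convex in the direction of the imaginary axis.

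First I would check that the common dilatation is inherited by $f_3$. Writing $\omega=\omega_1=\omega_2$, the relations $g_1'=\omega h_1'$ and $g_2'=\omega h_2'$ give the identity $g_3'=\omega h_3'$ of analytic functions, whence $F_3'=(1+\omega)h_3'$. Since $|\omega|<1$ in $E$, the factor $1+\omega$ never vanishes, so $h_3'$ and $F_3'$ have the same zeros, and wherever $h_3'\neq0$ the dilatation of $f_3$ is $\omega$.

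Next I would establish $\Re[(1-z^2)F_3'(z)]\ge0$ on $E$. Because $\Re\overline g_j=\Re g_j$ we have $\Re f_j=\Re F_j$, so the condition (2) assumed for $f_1,f_2$, together with their univalence and convexity in the direction of the imaginary axis, is precisely clause (iii) of Lemma 1.1 for $F_1$ and $F_2$; hence $\Re[(1-z^2)F_1']\ge0$ and $\Re[(1-z^2)F_2']\ge0$ on $E$. From $(1-z^2)F_3'=t(1-z^2)F_1'+(1-t)(1-z^2)F_2'$ and $t,1-t\ge0$, linearity of $\Re$ gives $\Re[(1-z^2)F_3']\ge0$. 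Moreover, by the normalization of $f_1,f_2$ one has $F_3'(0)=h_3'(0)+g_3'(0)=1+\omega(0)\neq0$ since $|\omega(0)|<1$, so $F_3$ is non-constant and Lemma 1.1 applies to it: $F_3$ is univalent and convex in the direction of the imaginary axis.

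Finally I would assemble the conclusion. A univalent analytic function has non-vanishing derivative, so $F_3'\neq0$ on $E$; by $h_3'=F_3'/(1+\omega)$ this forces $h_3'\neq0$, and then $f_3$ is locally univalent and sense-preserving with dilatation $\omega$, $|\omega|<1$. Since $h_3-e^{2i(\pi/2)}g_3=h_3+g_3=F_3$ is univalent analytic and convex in the direction of the imaginary axis, Lemma 1.2 yields that $f_3$ is a univalent harmonic mapping of $E$ onto a domain convex in the direction of the imaginary axis. The step I expect to be most delicate is the local-univalence bookkeeping: a convex combination of the non-vanishing functions $h_1'$ and $h_2'$ need not be non-vanishing on its own, and one only recovers $h_3'\neq0$ after learning that $F_3$ is univalent, so the order in which the steps are carried out is essential.
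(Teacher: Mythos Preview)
The paper does not supply its own proof of this theorem; it is quoted in the introduction as a known result of Dorff, so there is no in-paper argument to compare against. Your proof is correct, and in fact it follows exactly the template the authors use later for their own results (Theorems~2.1 and~2.12): pass to the analytic shear $F_j=h_j+g_j$, use Lemma~1.1 together with the normalization condition~(2) to obtain $\Re[(1-z^2)F_j']\ge 0$, take the convex combination, reapply Lemma~1.1 to $F_3$, and then return via Lemma~1.2. Your observation that $\Re f_j=\Re F_j$, which is what allows the hypothesis~(2) on the harmonic maps to be read as clause~(iii) of Lemma~1.1 for the analytic $F_j$, is the key transfer step and is handled correctly. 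The one place where you go beyond what the paper spells out is the local-univalence bookkeeping for $f_3$: you rightly note that $h_3'\ne 0$ is not automatic from $h_1',h_2'\ne 0$ and must instead be recovered \emph{a posteriori} from the univalence of $F_3$ via $h_3'=F_3'/(1+\omega)$; this is a genuine point that the paper's analogous proofs leave implicit.
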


In a recent paper, Wang et al.[\ref{wa and li}] derived several sufficient conditions on harmonic univalent functions $f_1$ and $f_2$ so that their linear combination $f_3=t f_1+(1-t)f_2,\,0\leq t\leq1,$ is univalent and convex  in the direction  of the real axis. In particular they established:
 \begin{theorem} Let $f_j=h_j+\overline{g}_{j} \in S_H$ with $\displaystyle h_j(z)+g_j(z)=z/(1-z)$ for $j=1,2$. Then $f_3=tf_1+(1-t)f_2,\, 0\leq t\leq 1$, is univalent and convex in the direction of the real axis.\end{theorem}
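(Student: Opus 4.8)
The plan is to rewrite $f_3$ as $h_3+\overline{g_3}$, check by hand that it is locally univalent and sense-preserving, and then deduce convexity in the direction of the real axis from the Clunie--Sheil-Small criterion (Lemma 1.2) applied with $\phi=0$; the analytic condition left to verify is then a positivity statement of Hengartner--Schober type. Set $\omega_j=g_j'/h_j'$, so $|\omega_j|<1$. Differentiating $h_j+g_j=z/(1-z)$ and using $g_j'=\omega_j h_j'$ gives $h_j'=\dfrac{1}{(1-z)^2(1+\omega_j)}$ and $g_j'=\dfrac{\omega_j}{(1-z)^2(1+\omega_j)}$. Since $f_3=h_3+\overline{g_3}$ with $h_3=th_1+(1-t)h_2$ and $g_3=tg_1+(1-t)g_2$, we still have $h_3+g_3=z/(1-z)$, and, writing $c(z)=\dfrac{t}{1+\omega_1(z)}+\dfrac{1-t}{1+\omega_2(z)}$ and using $\dfrac{\omega}{1+\omega}=1-\dfrac{1}{1+\omega}$,
$$h_3'(z)=\frac{c(z)}{(1-z)^2},\qquad g_3'(z)=\frac{1-c(z)}{(1-z)^2}.$$
Because $|\omega_j(z)|<1$, each $1/(1+\omega_j(z))$ has real part exceeding $\tfrac12$, so $\Re c(z)>\tfrac12$; hence $c(z)\neq0$ (so $h_3'\neq0$), and $\omega_3:=g_3'/h_3'=(1-c)/c$ satisfies $|\omega_3|<1$, since $\Re c>\tfrac12$ is exactly $|1-c|^2<|c|^2$. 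Therefore $f_3$ is locally univalent and sense-preserving.

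Next I would apply Lemma 1.2 with $\phi=0$: since $h_3-e^{2i\phi}g_3=h_3-g_3$, the map $f_3$ is a univalent harmonic mapping of $E$ onto a domain convex in the direction of the real axis if and only if $\Phi:=h_3-g_3$ is a univalent analytic mapping of $E$ onto such a domain. From the formulas above,
$$(1-z)^2\Phi'(z)=2c(z)-1=t\,\frac{1-\omega_1(z)}{1+\omega_1(z)}+(1-t)\,\frac{1-\omega_2(z)}{1+\omega_2(z)},$$
a convex combination of functions of positive real part (since $|\omega_j|<1$); hence $\Re[(1-z)^2\Phi'(z)]>0$ in $E$, and in particular $\Phi$ is non-constant.

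It then remains to invoke the analogue of Lemma 1.1 for the direction of the real axis: an analytic non-constant function $\Phi$ in $E$ with $\Re[(1-z)^2\Phi'(z)]\geq 0$ is univalent in $E$ and maps $E$ onto a domain convex in the direction of the real axis. Univalence is immediate, because $\Phi'/\psi'=(1-z)^2\Phi'$ has positive real part for the convex function $\psi(z)=z/(1-z)$, so $\Phi$ is close-to-convex and hence univalent. For the direction of convexity one works with the level sets $\{z:\Im\Phi(z)=c\}$: since $\Phi'\neq0$ these are smooth crosscuts of $E$, and the positivity $\Re[(1-z)^2\Phi'(z)]>0$ controls $\arg\Phi'$ on $\partial E$ tightly enough that, traversed with $\Phi(E)$ on the left, the boundary curve of $\Phi(E)$ has tangent direction confined to a fixed closed half-plane; this forces each such level set to be connected, i.e.\ every horizontal line to meet $\Phi(E)$ in an interval. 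Together with Lemma 1.2 this yields the assertion.

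The bookkeeping with the dilatations — the identity $h_3+g_3=z/(1-z)$, the formula $\omega_3=(1-c)/c$, and the bound $|\omega_3|<1$ obtained from $\Re[1/(1+\omega)]>\tfrac12$ for $|\omega|<1$ — is routine, as is the use of Lemma 1.2. (If one only wanted $f_3$ to be convex in the direction of the \emph{imaginary} axis, Lemma 1.2 with $\phi=\pi/2$ would close the argument already at $h_3+g_3=z/(1-z)$, the right half-plane being convex in every direction; the real-axis conclusion is the genuinely harder one.) The main obstacle is therefore the last step: pinning down (or locating in the literature) the precise real-axis counterpart of Lemma 1.1 and, in particular, upgrading ``close-to-convex'' to ``convex in the direction of the real axis'' from the positivity of $\Re[(1-z)^2\Phi'(z)]$.
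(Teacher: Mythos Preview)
The paper does not give its own proof of this statement: Theorem~1.4 is quoted from Wang--Liu--Li as background, so there is nothing in the present paper to compare your argument against directly. That said, your approach is exactly the one underlying Wang--Liu--Li, and it also matches the machinery the present paper uses for its own results. Your dilatation formula $\omega_3=(1-c)/c$ with $c=t/(1+\omega_1)+(1-t)/(1+\omega_2)$ is, after clearing denominators, precisely the expression
\[
\omega=\frac{t\omega_1+(1-t)\omega_2+\omega_1\omega_2}{1+t\omega_2+(1-t)\omega_1}
\]
appearing in the paper's Theorem~2.3 (the $\alpha_1=\alpha_2$ case of Lemma~2.2); your proof that $|\omega_3|<1$ via $\Re c>\tfrac12$ is a clean repackaging of the same inequality the paper imports from \cite{wa and li}.

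On the point you flag as the obstacle: the implication ``$\Re[(1-z)^2\Phi'(z)]>0$ in $E$ $\Rightarrow$ $\Phi$ is univalent and convex in the direction of the real axis'' is standard. It is the degenerate case $\mu=0$, $\nu=0$ of the Royster--Ziegler characterization of functions convex in one direction (Publ.\ Math.\ Debrecen \textbf{23} (1976), 339--345), and can also be obtained from Hengartner--Schober by a rotation. Your own heuristic is already the core of a direct proof: on $z=e^{i\theta}$ one has $(1-z)^2=-4e^{i\theta}\sin^2(\theta/2)$ and $e^{i\theta}\Phi'(e^{i\theta})=-i\,\tfrac{d}{d\theta}\Phi(e^{i\theta})$, so the positivity condition becomes $\tfrac{d}{d\theta}\Im\Phi(e^{i\theta})\le 0$ for $0<\theta<2\pi$; thus $\Im\Phi$ is monotone along the boundary circle (with the single turning point at $z=1$), and every horizontal line meets $\Phi(E)$ in a connected set. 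With that reference or that computation inserted, your proof is complete.
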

\indent From the above two papers it is observed that dilatation functions of $f_1$ and $f_2$ play an important role in deciding the behavior of their linear combinations. In the present paper, our aim is to study linear combinations of functions from the family of locally univalent and sense-preserving harmonic functions $f_{\alpha}=h_{\alpha}+\overline{g}_{\alpha},$ obtained by shearing of $\displaystyle F_\alpha(z)=h_{\alpha}(z)+g_{\alpha}(z)= \displaystyle {z(1-\alpha z)}/{(1-z^2)},\,\alpha\in[-1,1].$ Linear combinations of $f_{\alpha}$ and $f_{\theta}$ are also studied, where $f_{\theta}=h_{\theta}+\overline{g}_{\theta}$ is the harmonic function obtained by shearing of analytic vertical strip mapping \begin{equation}  h_\theta(z)+g_\theta(z)= \frac{1}{2i\,\sin\theta} \log\left(\frac{1+ze^{i\theta}}{1+ze^{-i\theta}}\right), \theta\in(0,\pi).\end{equation}
\section{Main Results}
 Let $$f_{\alpha}=h_{\alpha}+\overline{g}_{\alpha},\,{\rm  where}\,\, \displaystyle F_\alpha(z)=h_{\alpha}(z)+g_{\alpha}(z)= \displaystyle \frac{z(1-\alpha z)}{1-z^2},\,\alpha\in[-1,1],$$ be a normalized, locally univalent and sense-preserving mapping in $E$. We first prove that $f_{\alpha}$ is in $S_H$ and convex in the direction of the imaginary axis.
Since \begin{equation}
\Re[(1-z^2)F'_\alpha(z)]=\Re{\left[\frac{1+z^2-2\alpha z}{(1-z^2)}\right]}=\frac{(1-|z|^2)(1+|z|^2-2\alpha \Re{(z)})}{|1-z^2|^2}>0\,\, {\rm for\,\,all}\,\, z\in E,\end{equation} therefore, in view of Lemma 1.1, the analytic function $F_\alpha=h_{\alpha}+g_{\alpha}$ is univalent in $E$ and convex in the direction of the imaginary axis. Consequently, by Lemma 1.2, the harmonic function $f_{\alpha}=h_{\alpha}+\overline{g}_{\alpha}$ is in $S_H$ and also convex in the direction of the imaginary axis. However the harmonic mappings $f_{\alpha}=h_{\alpha}+\overline{g}_{\alpha},\,\alpha\in[-1,1],$ may not be convex in the direction of the real axis, in general (e.g. take $\alpha=-0.5$ and the dilatation $\displaystyle \omega(z)={g'(z)}/{h'(z)}=-z^2$).

In the following result we show that for the linear combination of $f_{\alpha_{1}}$ and $f_{\alpha_{2}}$ to be in $S_H$ and convex in the direction of the imaginary axis it is sufficient that the linear combination is locally univalent and sense-preserving.
\begin{theorem} Let $\displaystyle f_{\alpha_{i}}=h_{\alpha_{i}}+\overline{g}_{\alpha_{i}} \in S_H$, where $h_{\alpha_{i}}(z)+g_{\alpha_{i}}(z)=\displaystyle {z(1-\alpha_{i} z)}/{(1-z^2)},\, \alpha_{i} \in [-1,1]$ for $i=1,2$. Then the mapping $f=t f_{\alpha_{1}}+(1-t)f_{\alpha_{2}}\,,0\leq t\leq1,$ is in $S_H$ and is convex in the direction of the imaginary axis, provided $f$ is locally univalent and sense-preserving. \end{theorem}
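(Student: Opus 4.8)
The plan is to route everything through Lemma 1.2 with $\phi=\pi/2$, exploiting the fact that forming the linear combination interacts very simply with the operation $h\mapsto h+g$. Write $f=h+\overline{g}$ with $h=th_{\alpha_1}+(1-t)h_{\alpha_2}$ and $g=tg_{\alpha_1}+(1-t)g_{\alpha_2}$. Since each $f_{\alpha_i}$ is normalized, so is $f$: indeed $f(0)=0$ and $f_z(0)=t\cdot1+(1-t)\cdot1=1$. Combined with the standing hypothesis that $f$ is locally univalent and sense-preserving, it therefore suffices to prove that $f$ is univalent and maps $E$ onto a domain convex in the direction of the imaginary axis; this will give both $f\in S_H$ and the asserted convexity at once.

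The key step is the observation that
$$h(z)+g(z)=t\bigl(h_{\alpha_1}(z)+g_{\alpha_1}(z)\bigr)+(1-t)\bigl(h_{\alpha_2}(z)+g_{\alpha_2}(z)\bigr)=\frac{z(1-\alpha z)}{1-z^2}=F_\alpha(z),$$
where $\alpha:=t\alpha_1+(1-t)\alpha_2$. Since $\alpha_1,\alpha_2\in[-1,1]$ and $0\le t\le1$, the convex combination $\alpha$ again lies in $[-1,1]$. Hence the inequality $\Re[(1-z^2)F'_\alpha(z)]>0$ established above (valid for every $\alpha\in[-1,1]$) applies to this particular $\alpha$, and Lemma 1.1 shows that the analytic function $h+g=F_\alpha$ is univalent in $E$ and convex in the direction of the imaginary axis.

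Finally I would invoke Lemma 1.2 with $\phi=\pi/2$, so that $e^{2i\phi}=-1$ and the associated analytic function is $h-e^{2i\phi}g=h+g$. Since $f$ is locally univalent by hypothesis and $h+g$ is a univalent analytic map of $E$ onto a domain convex in the direction of the imaginary axis by the previous step, Lemma 1.2 yields that $f$ is a univalent harmonic mapping of $E$ onto a domain convex in the direction of the imaginary axis. Together with the normalization and the sense-preserving assumption, $f\in S_H$, which completes the proof.

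As for difficulty, there is essentially no analytic obstacle: the content of the statement is the structural fact that the shear function $h+g$ of the linear combination is again of the special form $F_\alpha$, with parameter the convex combination of $\alpha_1$ and $\alpha_2$, so that univalence and convexity in the direction of the imaginary axis are inherited automatically. The only hypothesis that genuinely cannot be dropped is local univalence (and sense-preservation) of $f$ itself: linear combinations of locally univalent harmonic mappings need not be locally univalent, which is precisely why this appears as an assumption rather than a conclusion and must be verified separately in applications.
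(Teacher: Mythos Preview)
Your proof is correct and follows the same overall route as the paper: show that the analytic shear $h+g$ satisfies the Hengartner--Schober condition of Lemma~1.1, then invoke Lemma~1.2. The one difference is in how positivity of $\Re[(1-z^2)(h+g)']$ is obtained. The paper writes $F=tF_{\alpha_1}+(1-t)F_{\alpha_2}$ and uses linearity,
\[
\Re[(1-z^2)F'(z)]=t\,\Re[(1-z^2)F'_{\alpha_1}(z)]+(1-t)\,\Re[(1-z^2)F'_{\alpha_2}(z)]>0,
\]
while you note the sharper structural fact that $h+g=F_\alpha$ with $\alpha=t\alpha_1+(1-t)\alpha_2\in[-1,1]$, so inequality~(4) applies directly to this new parameter. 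Both arguments are equally short; yours additionally records that the family $\{F_\alpha:\alpha\in[-1,1]\}$ is closed under convex combinations, which is a pleasant bonus but not needed for the theorem.
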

\begin{proof} Define $F=t F_{\alpha_{1}}+ (1-t) F_{\alpha_{2}}$ where $F_{\alpha{i}}=h_{\alpha_{i}}+g_{\alpha_{i}}\,\,(i=1,2)$. Using (4), we immediately get  $$\Re{\left[(1-z^2)F'(z)\right]}=t\Re{\left[(1-z^2)F'_{\alpha_{1}}(z)\right]}+(1-t)\Re{\left[(1-z^2)F'_{\alpha_{2}}(z)\right]}>0,\,\, {\rm for\,\,all}\,\,z\in E.$$
Thus $F$ is analytic univalent and convex in the direction of the imaginary axis, by Lemma 1.1. Therefore if $f=h+\overline{g},$ where $h+g=F,$ is locally univalent and sense-preserving, then, in view of Lemma 1.2, $f \in S_H$ and maps $E$ onto a domain convex in the direction of the imaginary axis. \end{proof}
 We know that $f=h+\overline{g}$ will be locally univalent and sense-preserving if and only if $h' \not=0$ in $E$ and its dilatation function $\omega$ (say) satisfies $|\omega|<1,$ in $E$. So, we first find expression for $\omega$.

\begin{lemma} Let $\displaystyle f_{\alpha_{i}}=h_{\alpha_{i}}+\overline{g}_{\alpha_{i}}$ be in $S_H$ such that $h_{\alpha_{i}}(z)+g_{\alpha_{i}}(z)=\displaystyle {z(1-\alpha_{i} z)}/{(1-z^2)},\,\, \alpha_{i} \in [-1,1]$ for $i=1,2.$ If $\displaystyle \omega_{i}={g_i'}/{h_i'},\,i=1,2,$ are dilatation functions of $f_{\alpha_{i}},\, i=1,2,$ respectively, then the dilatation function $\omega$ of $f=t f_{\alpha_{1}}+(1-t)f_{\alpha_{2}},\,0\leq t\leq1,$ is given by
\begin{equation}\displaystyle \omega(z)= \left[\frac{(1+z^2)(t\omega_1+(1-t)\omega_2+\omega_1\omega_2)-2z(\alpha_{1} t \omega_1+\alpha_{1} t \omega_1\omega_2+(1-t)\omega_2\alpha_{2}+(1-t)\omega_1\omega_2\alpha_{2})}
{(1+z^2)(1+t\omega_2+(1-t)\omega_1)-2z(\alpha_{2}+\alpha_{1} t \omega_2+(1-t)\alpha_{2}\omega_1+\alpha_{1} t-\alpha_{2} t)}\right].\end{equation}\end{lemma}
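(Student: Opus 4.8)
The plan is to compute $\omega = g'/h'$ directly from the canonical decomposition of the linear combination and then simplify. Writing $f = h + \overline{g}$ with $h = t\,h_{\alpha_1} + (1-t)\,h_{\alpha_2}$ and $g = t\,g_{\alpha_1} + (1-t)\,g_{\alpha_2}$, we have $\omega = \bigl(t\,g_1' + (1-t)\,g_2'\bigr)\big/\bigl(t\,h_1' + (1-t)\,h_2'\bigr)$, where I abbreviate $h_i = h_{\alpha_i}$ and $g_i = g_{\alpha_i}$. So the first step is to express each $h_i'$ and $g_i'$ in terms of the quantities we control, namely $F_{\alpha_i} = h_i + g_i$ and $\omega_i = g_i'/h_i'$. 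Since $f_{\alpha_i}\in S_H$ is sense-preserving we have $|\omega_i|<1$, so $1+\omega_i$ never vanishes, and from $h_i' + g_i' = F_{\alpha_i}'$ together with $g_i' = \omega_i h_i'$ one gets $h_i' = F_{\alpha_i}'/(1+\omega_i)$ and $g_i' = \omega_i F_{\alpha_i}'/(1+\omega_i)$.

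Next I would compute $F_{\alpha_i}'$ explicitly. Differentiating $F_{\alpha_i}(z) = z(1-\alpha_i z)/(1-z^2)$ gives $F_{\alpha_i}'(z) = (1 + z^2 - 2\alpha_i z)/(1-z^2)^2$, whose numerator is exactly the expression already appearing in (4). Substituting the formulas for $h_i'$ and $g_i'$ into the quotient for $\omega$, the common factor $1/(1-z^2)^2$ cancels from numerator and denominator, leaving
$$\omega(z) = \frac{t\,\omega_1(1+z^2-2\alpha_1 z)/(1+\omega_1) + (1-t)\,\omega_2(1+z^2-2\alpha_2 z)/(1+\omega_2)}{t\,(1+z^2-2\alpha_1 z)/(1+\omega_1) + (1-t)\,(1+z^2-2\alpha_2 z)/(1+\omega_2)}.$$
Multiplying numerator and denominator by $(1+\omega_1)(1+\omega_2)$ then clears the remaining fractions.

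The last step is to expand the resulting numerator $t\,\omega_1(1+z^2-2\alpha_1 z)(1+\omega_2) + (1-t)\,\omega_2(1+z^2-2\alpha_2 z)(1+\omega_1)$ and denominator $t\,(1+z^2-2\alpha_1 z)(1+\omega_2) + (1-t)\,(1+z^2-2\alpha_2 z)(1+\omega_1)$, and to collect the coefficients of $(1+z^2)$ and of $z$. For the numerator the $(1+z^2)$-coefficient comes out to $t\omega_1 + (1-t)\omega_2 + \omega_1\omega_2$ and the $z$-coefficient to $-2\bigl(\alpha_1 t\omega_1 + \alpha_1 t\omega_1\omega_2 + (1-t)\omega_2\alpha_2 + (1-t)\omega_1\omega_2\alpha_2\bigr)$; for the denominator the $(1+z^2)$-coefficient is $1 + t\omega_2 + (1-t)\omega_1$ and, after rewriting $\alpha_1 t + (1-t)\alpha_2 = \alpha_2 + \alpha_1 t - \alpha_2 t$, the $z$-coefficient is $-2\bigl(\alpha_2 + \alpha_1 t\omega_2 + (1-t)\alpha_2\omega_1 + \alpha_1 t - \alpha_2 t\bigr)$. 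This is precisely (6). I do not expect any genuine obstacle here beyond careful bookkeeping: the only mildly delicate points are keeping track of the $\omega_1\omega_2$ cross terms generated by the products with $(1+\omega_j)$, and regrouping the constant $\alpha$-terms in the denominator so as to match the printed normalization.
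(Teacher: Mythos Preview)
Your proposal is correct and follows essentially the same route as the paper: write $\omega=(t g_1'+(1-t)g_2')/(t h_1'+(1-t)h_2')$, use $h_i'=F_{\alpha_i}'/(1+\omega_i)$ with $F_{\alpha_i}'(z)=(1+z^2-2\alpha_i z)/(1-z^2)^2$, cancel the common $(1-z^2)^{-2}$ factor, clear denominators by $(1+\omega_1)(1+\omega_2)$, and regroup. The only slip is the label: the displayed formula in the lemma is equation~(5), not~(6).
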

\begin{proof} As $f=t f_{\alpha_{1}}+(1-t)f_{\alpha_{2}}=t h_1+(1-t)h_2+t\overline{g}_1+(1-t)\overline{g}_{2}$\,\, so, $$\displaystyle \omega=\frac{t g_1'+(1-t)g_2'}{t h_1'+(1-t) h_2'}=\frac{t \omega_1 h_1'+(1-t)\omega_2 h_2'}{t h_1'+(1-t) h_2'}.$$ From $h_{\alpha_{i}}(z)+g_{\alpha_{i}}(z)=\displaystyle \frac{z(1-\alpha_{i} z)}{1-z^2}$  and $\displaystyle \omega_{i}=\frac{g_i'}{h_i'},\,i=1,2,$ we get $$h'_1(z)=\frac{1+z^2-2\alpha_{1} z}{(1+\omega_1)(1-z^2)^2}\quad{\rm and}\quad h'_2(z)=\frac{1+z^2-2\alpha_{2} z}{(1+\omega_2)(1-z^2)^2}.$$
 Thus, substituting these into $\omega$ gives
$$ \begin{array}{clll}
\displaystyle \omega(z)=\displaystyle \frac{t\omega_1(1+z^2-2\alpha_{1} z)(1+\omega_2)+(1-t)\omega_2(1+z^2-2\alpha_{2} z)(1+\omega_1)}{t(1+\omega_2)(1+z^2-2\alpha_{1} z)+(1-t)(1+\omega_1)(1+z^2-2\alpha_{2} z)},
\end{array} $$
which reduces to (5) after rearrangement of terms in the numerator and denominator.\end{proof}
\begin{theorem} Let $\displaystyle f_{\alpha_{i}}=h_{\alpha_{i}}+\overline{g}_{\alpha_{i}} \in S_H$, where $h_{\alpha_{i}}(z)+g_{\alpha_{i}}(z)=\displaystyle {z(1-\alpha_i z)}/{(1-z^2)},\,\alpha_i \in [-1,1]$ for $i=1,2.$ If\, $\alpha_1=\alpha_2,$ then $f=t f_{\alpha_{1}}+(1-t)f_{\alpha_{2}},$ \,$0\leq t\leq1 $, is in $S_H$ and is convex in the direction of the imaginary axis.\end{theorem}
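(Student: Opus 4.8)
The strategy is to reduce the statement to Theorem 2.1. That theorem already guarantees that $F=tF_{\alpha_1}+(1-t)F_{\alpha_2}$ is analytic, univalent and convex in the direction of the imaginary axis, and that $f=tf_{\alpha_1}+(1-t)f_{\alpha_2}$ belongs to $S_H$ and is convex in the direction of the imaginary axis \emph{provided} $f$ is locally univalent and sense-preserving. So, under the extra hypothesis $\alpha_1=\alpha_2$, the only thing left to verify is that $f=h+\overline g$ has $h'\neq0$ in $E$ and dilatation $\omega$ with $|\omega|<1$ in $E$.

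First I would put $\alpha_1=\alpha_2=:\alpha$ in formula (5) of Lemma 2.2. A short inspection shows that $1+z^2-2\alpha z$ is a common factor of the numerator and the denominator there (in the numerator one uses $t\omega_1\omega_2+(1-t)\omega_1\omega_2=\omega_1\omega_2$; in the denominator the terms $\alpha t$ and $-\alpha t$ cancel), and after cancelling it
\[
\omega(z)=\frac{t\omega_1+(1-t)\omega_2+\omega_1\omega_2}{1+t\omega_2+(1-t)\omega_1}.
\]
The non-vanishing of $h'$ is then easy: from $h_i'=(1+z^2-2\alpha z)/[(1+\omega_i)(1-z^2)^2]$ (established in the proof of Lemma 2.2) we get
\[
h'=th_1'+(1-t)h_2'=\frac{1+z^2-2\alpha z}{(1-z^2)^2}\left(\frac{t}{1+\omega_1}+\frac{1-t}{1+\omega_2}\right),
\]
which is non-zero in $E$, since $1+z^2-2\alpha z\neq0$ there by (4) while $|\omega_i|<1$ makes $\Re\{1/(1+\omega_i)\}>0$, so the bracket has positive real part.

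The main work is the inequality $|\omega|<1$. I would establish it by proving that
\[
N:=\bigl|1+t\omega_2+(1-t)\omega_1\bigr|^2-\bigl|t\omega_1+(1-t)\omega_2+\omega_1\omega_2\bigr|^2>0 .
\]
Expanding both squares, the cross terms $t(1-t)(\omega_1\overline{\omega_2}+\overline{\omega_1}\omega_2)$ cancel, and one is left with
\[
N=1-|\omega_1|^2|\omega_2|^2+2t\Re(\omega_2)\bigl(1-|\omega_1|^2\bigr)+2(1-t)\Re(\omega_1)\bigl(1-|\omega_2|^2\bigr)+\bigl(|\omega_1|^2-|\omega_2|^2\bigr)(1-2t).
\]
For fixed moduli $P=|\omega_1|$ and $Q=|\omega_2|$ this is affine in $\Re(\omega_1)$ with slope $2(1-t)(1-Q^2)\ge0$ and affine in $\Re(\omega_2)$ with slope $2t(1-P^2)\ge0$, so its minimum is attained at $\Re(\omega_1)=-P$, $\Re(\omega_2)=-Q$. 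With those values $N$ is affine in $t$, equal to $(1-P)^2(1-Q^2)$ at $t=0$ and to $(1-P^2)(1-Q)^2$ at $t=1$, hence equal to
\[
(1-P)(1-Q)\bigl[(1-t)(1-P)(1+Q)+t(1+P)(1-Q)\bigr],
\]
which is strictly positive because $P,Q<1$ and $0\le t\le1$. Thus $N>0$, so $|\omega|<1$ in $E$; together with $h'\neq0$ this makes $f$ locally univalent and sense-preserving, and Theorem 2.1 yields the conclusion. The delicate point is precisely this last step: the unreduced expansion of $N$ is cumbersome, and the trick is to observe that $N$ is monotone in each of $\Re(\omega_1)$ and $\Re(\omega_2)$, reduce to the extremal case, and then factor the remaining expression into manifestly non-negative pieces.
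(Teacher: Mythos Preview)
Your argument is correct and follows the same overall structure as the paper: reduce to Theorem 2.1, set $\alpha_1=\alpha_2$ in (5) to obtain
\[
\omega=\frac{t\omega_1+(1-t)\omega_2+\omega_1\omega_2}{1+t\omega_2+(1-t)\omega_1},
\]
and then show $|\omega|<1$. The difference is only in this last step: the paper does not prove the inequality but cites the proof of Theorem~3 in Wang--Liu--Li, whereas you give a self-contained argument by expanding $N=|\text{den}|^2-|\text{num}|^2$, observing its monotonicity in $\Re(\omega_1)$ and $\Re(\omega_2)$, and factoring the extremal value as $(1-P)(1-Q)[(1-t)(1-P)(1+Q)+t(1+P)(1-Q)]>0$. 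Your computation checks out, and you also verify $h'\neq0$ explicitly (via $\Re\{1/(1+\omega_i)\}>0$), which the paper leaves implicit. So your proof is essentially the paper's proof with the cited step unpacked and one extra detail supplied.
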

\begin{proof} In view of Theorem 2.1, it suffices to show that $f$ is locally univalent and sense-preserving. If $\omega_1$, $\omega_2$ and $\omega$ are dilatations of $f_{\alpha_{1}}$, $f_{\alpha_{2}}$ and $f$ respectively, then by setting  $\alpha_{1}=\alpha_{2}$ in (5), we get $$\displaystyle \omega=\frac{t\omega_1+(1-t)\omega_2+\omega_1\omega_2}{1+t\omega_2+(1-t)\omega_1}.$$
From the proof of Theorem 3 in [\ref{wa and li}], we get $|\omega|<1$. Hence $f$ is locally univalent and sense-preserving.
\end{proof}
 By taking  $\alpha_{1}=\alpha_{2}=-1$ in Theorem 2.3, we get the following result.
 \begin{corollary} If $f_i=h_i+\overline{g}_{i}\in S_H$ with $\displaystyle h_i(z)+g_i(z)={z}/{(1-z)}$ for $i=1,2$, then $f=tf_1+(1-t)f_2,\,0\leq t\leq 1,$ is in $S_H$ and is convex in the direction of the imaginary axis.\end{corollary}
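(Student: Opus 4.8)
The plan is to obtain this statement as an immediate specialization of Theorem 2.3. The one observation that links the hypothesis to the family $\{f_\alpha\}$ studied in the paper is the algebraic identity
$$\left.\frac{z(1-\alpha z)}{1-z^2}\right|_{\alpha=-1}=\frac{z(1+z)}{(1-z)(1+z)}=\frac{z}{1-z},$$
so that assuming $f_i=h_i+\overline{g}_i\in S_H$ with $h_i(z)+g_i(z)=z/(1-z)$ is precisely saying that $f_i=f_{\alpha_i}$ with $\alpha_i=-1$ for $i=1,2$. In particular $\alpha_1=\alpha_2=-1$, so the hypotheses of Theorem 2.3 are satisfied.

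Having checked this, I would simply invoke Theorem 2.3: since $\alpha_1=\alpha_2$, the mapping $f=tf_1+(1-t)f_2$, $0\leq t\leq1$, is in $S_H$ and is convex in the direction of the imaginary axis. That completes the argument. For completeness one can also spell out the mechanism: setting $\alpha_1=\alpha_2=-1$ in the dilatation formula (5) of Lemma 2.2 collapses it to
$$\omega=\frac{t\omega_1+(1-t)\omega_2+\omega_1\omega_2}{1+t\omega_2+(1-t)\omega_1},$$
which is exactly the expression obtained in the proof of Theorem 2.3; the bound $|\omega|<1$ on $E$ then follows from the proof of Theorem 3 in [\ref{wa and li}], so $f$ is locally univalent and sense-preserving. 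Combined with the fact that $F=tF_1+(1-t)F_2$ (with $F_i(z)=z/(1-z)$, the $\alpha=-1$ case of (4)) satisfies $\Re[(1-z^2)F'(z)]>0$ on $E$ as a convex combination, Theorem 2.1 delivers the conclusion.

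There is essentially no obstacle here; the only point requiring a moment's care is the bookkeeping around normalization, namely that the hypothesis "$f_i\in S_H$ with $h_i+g_i=z/(1-z)$" already presupposes that each $f_i$ is individually locally univalent and sense-preserving, while the non-trivial content, imported through Theorem 2.3 from [\ref{wa and li}], is that this property survives under the convex combination $tf_1+(1-t)f_2$.
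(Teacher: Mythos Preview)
Your proposal is correct and matches the paper's approach exactly: the paper derives this corollary by simply taking $\alpha_1=\alpha_2=-1$ in Theorem~2.3, which is precisely the specialization you identify via the identity $z(1+z)/(1-z^2)=z/(1-z)$. Your additional ``for completeness'' remarks just unwind the proofs of Theorems~2.1 and~2.3, so there is no divergence in method.
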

  Michalski [\ref{mi}], defined the class $COD_H(\theta)$ consisting of functions $f\in S_H$, which map the unit disk $E$ onto domains convex in directions of the lines $z=te^{i \theta}\,,t\in\mathbb{R}$ and $z=te^{i(\theta+\frac{\pi}{2})}\,,t\in\mathbb{R}$ for each $\theta\in[0,{\pi}/{2}).$ Combining results of Theorem 1.4 and Corollary 2.4, we immediately get the following result.
 \begin{theorem}  Let $f_i=h_i+\overline{g}_{i} \in S_H$ where $\displaystyle h_i(z)+g_i(z)={z}/{(1-z)}\,\,for \,\,i=1,2$. Then, $f=tf_1+(1-t)f_2,\,0\leq t\leq 1,$ is in $COD_H(0).$\end{theorem}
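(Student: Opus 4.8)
The plan is to unpack the definition of $COD_H(0)$ and then read off its three constituent requirements from the two cited results, which --- crucially --- carry exactly the same hypothesis. Taking $\theta=0$ in Michalski's definition, the statement $f\in COD_H(0)$ amounts to: (i) $f\in S_H$; (ii) $f(E)$ is convex in the direction of the line $z=te^{i\cdot 0}$, i.e.\ convex in the direction of the real axis; and (iii) $f(E)$ is convex in the direction of the line $z=te^{i(0+\pi/2)}$, i.e.\ convex in the direction of the imaginary axis. So the whole task is to check these three items for $f=tf_1+(1-t)f_2$.

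First I would apply Theorem 1.4 to the pair $f_1,f_2$: since $h_i(z)+g_i(z)=z/(1-z)$ for $i=1,2$, it yields at once that $f=tf_1+(1-t)f_2$, $0\le t\le 1$, is univalent and convex in the direction of the real axis, which is precisely (ii). Next I would apply Corollary 2.4 to the \emph{same} pair $f_1,f_2$ --- this is legitimate because the hypotheses of Theorem 1.4 and of Corollary 2.4 coincide verbatim, namely $f_i=h_i+\overline{g}_i\in S_H$ with $h_i(z)+g_i(z)=z/(1-z)$. Corollary 2.4 then gives that $f=tf_1+(1-t)f_2$, $0\le t\le 1$, lies in $S_H$ (so in particular it is harmonic, sense-preserving and univalent on $E$) and that it maps $E$ onto a domain convex in the direction of the imaginary axis; this supplies (i) and (iii).

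Having verified (i), (ii) and (iii), comparison with Michalski's definition of $COD_H(\theta)$ at $\theta=0$ gives $f\in COD_H(0)$, as required. There is no genuine obstacle in this argument: the only thing that has to be observed is that the real-axis convexity delivered by Theorem 1.4 and the imaginary-axis convexity delivered by Corollary 2.4 both hold for the very same linear combination under the very same normalization $h_i+g_i=z/(1-z)$, so the two conclusions combine directly into membership in $COD_H(0)$.
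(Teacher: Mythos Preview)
Your proof is correct and follows precisely the approach indicated in the paper, which simply remarks that the result follows by ``combining results of Theorem 1.4 and Corollary 2.4''; you have merely made that combination explicit by unpacking the definition of $COD_H(0)$ and reading off the three required properties from the two cited results.
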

The following lemma, popularly known as Cohn's Rule, will be required in proving our next result.
\begin{lemma} ([\ref{ra and sc}, p.375])  Given a polynomial $p(z)= a_0 + a_1z + a_2z^2+...+a_nz^n$ of
degree $n$, let $$ p^*(z)=\displaystyle z^n\overline{p\left(\frac{1}{\overline z}\right)} = \overline {a}_n + \overline {a}_{n-1}z + \overline {a}_{n-2}z^2 +...+ \overline{a}_0z^n.$$
Denote by $r$ and $s$ the number of zeros of $p$ inside and on the unit circle $|z|=1$, respectively.
 If $|a_0|<|a_n|,$ then $$ p_1(z)= \frac{\overline {a}_np(z)-a_0p^*(z)}{z}$$ is of degree $n-1$ and has $r_1=r-1$ and $s_1=s$ number
 of zeros inside and on the unit circle $|z|=1$, respectively.\end{lemma}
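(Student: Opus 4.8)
The plan is to establish this classical fact (Cohn's rule) in two stages: first the boundary‑free case, in which $p$ has no zeros on $|z|=1$ and Rouch\'e's theorem applies directly, and then the general case, obtained by splitting off from $p$ a self‑inversive factor carrying precisely its unit‑circle zeros. Before either stage I would record the elementary degree count: writing $q(z):=\overline{a}_np(z)-a_0p^{*}(z)=zp_1(z)$, the coefficient of $z^{n}$ in $q$ is $|a_n|^{2}-|a_0|^{2}>0$, while its constant term is $\overline{a}_na_0-a_0\overline{a}_n=0$; hence $q$ has degree exactly $n$ and is divisible by $z$, so $p_1=q/z$ has degree $n-1$ (with leading coefficient $|a_n|^{2}-|a_0|^{2}$).

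For the boundary‑free case ($s=0$): on $|z|=1$ one has $1/\overline z=z$, so $p^{*}(z)=z^{n}\overline{p(z)}$ and $|p^{*}(z)|=|p(z)|$ there. If $p$ has no zeros on the circle, then on it $|a_0p^{*}(z)|=|a_0|\,|p(z)|<|a_n|\,|p(z)|=|\overline{a}_np(z)|$, a strict inequality between positive numbers, so by Rouch\'e's theorem $q=\overline{a}_np-a_0p^{*}$ has the same number of zeros in $|z|<1$ as $\overline{a}_np$, namely $r$. Since $q(0)=0$, dividing by $z$ leaves $p_1$ with exactly $r-1$ zeros in $|z|<1$; and $|q(z)|\ge(|a_n|-|a_0|)|p(z)|>0$ on $|z|=1$ shows $p_1$ has no zeros there, i.e. $s_1=0=s$.

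In general, factor $p=uv$, where $u(z)=\prod_{k=1}^{s}(z-\omega_{k})$ is the monic polynomial carrying exactly the $s$ zeros of $p$ on $|z|=1$ (with multiplicity), so that $\deg v=n-s$, $v$ has $r$ zeros in $|z|<1$, and $v$ has none on $|z|=1$. Using $1/\overline{\omega_{k}}=\omega_{k}$ one checks that $u$ is self‑inversive, $u^{*}=\overline{u(0)}\,u$ with $|u(0)|=1$; hence $p^{*}=u^{*}v^{*}=\overline{u(0)}\,u\,v^{*}$, where $v^{*}(z)=z^{\,n-s}\overline{v(1/\overline z)}$. Writing $b$ for the leading coefficient of $v$ (so $a_n=b$) and using $a_0=u(0)v(0)$ together with $a_0\overline{u(0)}=|u(0)|^{2}v(0)=v(0)$, one obtains
$$q=\overline{a}_np-a_0p^{*}=u\bigl(\overline{b}\,v-v(0)\,v^{*}\bigr)=u\cdot zv_1,$$
where $v_1:=(\overline{b}\,v-v(0)\,v^{*})/z$ is the Cohn transform of $v$; thus $p_1=uv_1$. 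Since $|v(0)|=|a_0|<|a_n|=|b|$ and $v$ has no zeros on $|z|=1$, the boundary‑free case applies to $v$: $v_1$ has degree $n-s-1$, with $r-1$ zeros in $|z|<1$ and none on $|z|=1$. Counting zeros of $p_1=uv_1$ then gives degree $n-1$, exactly $r-1$ zeros in $|z|<1$, and exactly $s$ zeros on $|z|=1$ (all contributed by $u$), as claimed.

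The main obstacle is precisely the possibility of zeros of $p$ on $|z|=1$: there the Rouch\'e comparison degenerates, since both $|\overline{a}_np|$ and $|a_0p^{*}|$ vanish, so the theorem cannot be invoked directly. The content of the proof is therefore the reduction to the boundary‑free case, carried by the self‑inversive split $p=uv$ and the bookkeeping identities $u^{*}=\overline{u(0)}u$ and $a_0\overline{u(0)}=v(0)$, which together force the Cohn transform to factor as $p_1=uv_1$. An alternative would be to perturb $p$ so that its circle zeros move strictly inside and then let the perturbation tend to $0$, but tracking multiplicities (and ensuring no interior zeros escape) in that limit is itself delicate, so I would prefer the factorization route.
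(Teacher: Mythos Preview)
Your argument is correct. The degree computation, the Rouch\'e step in the boundary-free case, and the self-inversive factorization $p=uv$ with the identity $p_1=uv_1$ are all sound; the bookkeeping $a_0\overline{u(0)}=|u(0)|^{2}v(0)=v(0)$ is the right way to see that the Cohn transform respects the splitting.

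As for comparison: the paper does not prove this lemma at all. It is quoted verbatim as Cohn's rule from Rahman and Schmeisser (\emph{Analytic Theory of Polynomials}, p.~375) and used as a black box in the proof of Theorem~2.7. So you have supplied a complete proof where the paper simply cites the literature; there is nothing in the paper to compare against beyond the statement itself.
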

 We now prove the following.
\begin{theorem} Let $f_{\alpha_{i}}=h_{\alpha_{i}}+\overline{g}_{\alpha_{i}}$  be in $S_H$ where $h_{\alpha_{i}}(z)+g_{\alpha_{i}}(z)=\displaystyle {z(1-\alpha_i z)}/{(1-z^2)},\,\,\alpha_i \in [-1,1],$ for $i=1,2$. If $\omega_1(z)=-z$ and  $\omega_2(z)=z$ are dilatations of $f_{\alpha_{1}}$ and $f_{\alpha_{2}}$ respectively, then $f=t f_{\alpha_1}+(1-t)f_{\alpha_2},\,\,0\leq t\leq 1,$ is in $S_H$ and is convex in the direction of the imaginary axis provided $\alpha_1\geq\alpha_2.$\end{theorem}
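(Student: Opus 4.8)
The plan is to lean on Theorem 2.1: it reduces the statement to showing that $f=tf_{\alpha_1}+(1-t)f_{\alpha_2}$ is locally univalent and sense-preserving on $E$, i.e. $h'\neq0$ on $E$ and the dilatation $\omega$ of $f$ has $|\omega(z)|<1$ there. The endpoint cases $t=0$ and $t=1$ give $f=f_{\alpha_2}$ and $f=f_{\alpha_1}$, which are already known to lie in $S_H$ and to be convex in the direction of the imaginary axis, so I would assume $0<t<1$.

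First I would substitute the given dilatations $\omega_1(z)=-z$, $\omega_2(z)=z$ into formula $(5)$ of Lemma 2.2. After cancelling common factors this should collapse to $\omega(z)=zN(z)/D(z)$, where
$$D(z)=1+\bigl[(2t-1)-2t\alpha_1-2(1-t)\alpha_2\bigr]z+\bigl[1-2t\alpha_1+2(1-t)\alpha_2\bigr]z^2+(2t-1)z^3$$
is, up to the non-vanishing factor $(1-z^2)^3$, the numerator of $h'=th_{\alpha_1}'+(1-t)h_{\alpha_2}'$, so that $h'\neq0$ on $E$ amounts to $D\neq0$ on $E$; and a short computation shows $N(z)=-D^{*}(z)$, where $D^{*}(z)=z^3\,\overline{D(1/\overline{z})}$ is the polynomial associated to $D$ as in Lemma 2.6. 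Because $N=-D^{*}$, on $|z|=1$ one has $|zN(z)|=|D^{*}(z)|=|D(z)|$, so $|\omega|\equiv1$ on $\partial E$; hence, as soon as $D$ has no zero in $E$, the function $\omega=zN/D$ extends analytically across $\partial E$ (boundary zeros of $D$ are cancelled by $zN$, again since $N=-D^{*}$), vanishes at the origin, and has boundary modulus $1$, so the maximum principle yields $|\omega(z)|<1$ on $E$. Thus the whole theorem reduces to the purely polynomial claim: \emph{if $\alpha_1\geq\alpha_2$, then $D$ has no zero in $E$.}

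To prove that I would invoke Cohn's Rule (Lemma 2.6). Since $D\neq0$ in $E$ is equivalent to $D^{*}$ having all three of its zeros in $\overline{E}$, and since for $0<t<1$ the constant coefficient of $D^{*}$ has modulus $|2t-1|<1$ while its leading coefficient is $1$, Cohn's Rule applies to $D^{*}$ and produces the quadratic
$$p_1(z)=\frac{D^{*}(z)-(2t-1)D(z)}{z}=4t(1-t)\bigl[\,z^2-(\alpha_1+\alpha_2)z+(1+\alpha_2-\alpha_1)\,\bigr],$$
whose zeros inside $|z|=1$ are one fewer, and on $|z|=1$ equal in number, than those of $D^{*}$. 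So it suffices that $q(z):=z^2-(\alpha_1+\alpha_2)z+(1+\alpha_2-\alpha_1)$ have both zeros in $\overline{E}$, which I would check elementarily: if the zeros are non-real they form a conjugate pair with common modulus squared equal to the product $1+\alpha_2-\alpha_1\leq1$ (using $\alpha_1\geq\alpha_2$); if they are real, then $q(1)=2(1-\alpha_1)\geq0$, $q(-1)=2(1+\alpha_2)\geq0$, and the vertex abscissa $(\alpha_1+\alpha_2)/2\in[-1,1]$ confine both of them to $[-1,1]$. In either case the zeros of $q$ lie in $\overline{E}$; tracing Cohn's Rule back, $D^{*}$ has no zero outside $\overline{E}$, hence $D$ has no zero in $E$, and the conclusion follows from Theorem 2.1. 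The hypothesis is genuinely needed: for $\alpha_1<\alpha_2$ the product $1+\alpha_2-\alpha_1$ exceeds $1$, so a conjugate pair of zeros of $q$ — i.e. zeros of $h'$ — can escape $\overline{E}$.

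The obstacle here is bookkeeping rather than idea: the reduction of $(5)$ to $zN/D$, the verification of the identity $N=-D^{*}$, and the computation of the Cohn quotient $p_1$ are routine but error-prone, and one must separately note that when $2t-1=0$ (in particular $t=1/2$) the polynomial $D$ drops to degree $\le2$, so Cohn's Rule should be applied to the actual polynomial. The one substantive point — and the reason the theorem holds exactly when $\alpha_1\geq\alpha_2$ — is the clean fact that the quadratic extracted by Cohn's Rule is $z^2-(\alpha_1+\alpha_2)z+(1+\alpha_2-\alpha_1)$, whose root location is governed precisely by the sign of $\alpha_1-\alpha_2$.
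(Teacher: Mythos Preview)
Your proposal is correct and follows essentially the same route as the paper: reduce via Theorem~2.1 to local univalence, substitute $\omega_1=-z$, $\omega_2=z$ into (5) to obtain $\omega=-z\gamma/\gamma^{*}$ (your $D$ is the paper's $\gamma^{*}$ and your $D^{*}$ is its $\gamma$), and apply Cohn's rule once to arrive at the same quadratic $4t(1-t)\bigl[z^{2}-(\alpha_1+\alpha_2)z+(1+\alpha_2-\alpha_1)\bigr]$. The only differences are cosmetic: the paper phrases the $|\omega|<1$ step as a Blaschke-type factorization rather than via the maximum principle, and it handles the quadratic by a \emph{second} application of Cohn's rule instead of your direct product-of-roots/$q(\pm1)\ge 0$ argument; both treatments also isolate the case $t=\tfrac12$, where the cubic degenerates.
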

\begin{proof} In view of Theorem 2.1, it is sufficient to show that dilatation $\omega$ of $f$ satisfies $|\omega|<1$ in $E$. By using the shearing technique, we explicitly get $h_i$ and $g_i,\,\,i=1,2,$ as follows:
$$\indent\hspace{-2.8cm}h_1(z)=\frac{(1-\alpha_1)}{4(1-z)^2}-\frac{(1+\alpha_1)}{4(1+z)}+\frac{(1+\alpha_1)}{8}\log\left[\frac{1+z}{1-z}\right]+\frac{\alpha_1}{2},$$
$$g_1(z)=\frac{z(1-\alpha_1 z)}{1-z^2}-\frac{(1-\alpha_1)}{4(1-z)^2}+\frac{(1+\alpha_1)}{4(1+z)}-\frac{(1+\alpha_1)}{8}\log\left[\frac{1+z}{1-z}\right]-\frac{\alpha_1}{2};$$ and
$$\indent\hspace{-2.8cm}h_2(z)=\frac{(1-\alpha_2)}{4(1-z)}-\frac{(1+\alpha_2)}{4(1+z)^2}+\frac{(1-\alpha_2)}{8}\log\left[\frac{1+z}{1-z}\right]+\frac{\alpha_2}{2},$$
$$g_2(z)=\frac{z(1-\alpha_2 z)}{1-z^2}-\frac{(1-\alpha_2)}{4(1-z)}+\frac{(1+\alpha_2)}{4(1+z)^2}-\frac{(1-\alpha_2)}{8}\log\left[\frac{1+z}{1-z}\right]-\frac{\alpha_2}{2}.$$

The case when $\alpha_1=\alpha_2$ follows from Theorem 2.3. So, we shall only consider the case when $\alpha_1>\alpha_2$. Setting $\omega_1(z)=-z$ and $\omega_2(z)=z$ in (5) we get $$\omega(z)=\displaystyle \left[\frac{(1+z^2)(-tz+(1-t)z-z^2)-2z(-\alpha_1 t z-\alpha_1 t z^2+(1-t)\alpha_2 z-(1-t)\alpha_2 z^2)}
{(1+z^2)(1+t z-(1-t)z)-2z(\alpha_2+\alpha_1 t z-(1-t)\alpha_2 z+\alpha_1 t-\alpha_2 t)}\right]$$
\begin{equation}=-z\frac{[z^3+(2t-1-2\alpha_1 t-2\alpha_2(1-t))z^2+(1+2\alpha_2(1-t)-2\alpha_1 t)z+(2t-1)]}{[(2t-1)z^3+(1+2\alpha_2(1-t)-2\alpha_1 t)z^2+(2t-1-2\alpha_1 t-2\alpha_2(1-t))z+1]}.\end{equation}
 Let \\$\indent\hspace{.2cm}\displaystyle \gamma(z)=z^3+(2t-1-2\alpha_1 t-2\alpha_2(1-t))z^2+(1+2\alpha_2(1-t)-2\alpha_1 t)z+(2t-1)$\\
 $\indent\hspace{.9cm}= a_3z^3+a_2z^2+a_1z+a_0$ \\and\\ $\indent\hspace{.2cm}\displaystyle\gamma^*(z)=(2t-1)z^3+(1+2\alpha_2(1-t)-2\alpha_1 t)z^2+(2t-1-2\alpha_1 t-2\alpha_2(1-t))z+1=z^3\overline{\gamma\left(\frac{1}{\overline{z}}\right)}$ and notice that by (6), $\omega(z)=-z\displaystyle \frac{\gamma(z)}{\gamma^*(z)}.$ \\ Thus if $z_0,$ $z_0\not=0$, is a zero of $\gamma$ then $\displaystyle{1}/{\overline{z_0}}$ is a zero of $\gamma^*$. Therefore, we can write $$\displaystyle\omega(z)=-z\frac{(z+A)(z+B)(z+C)}{(1+\overline {A} z)(1+\overline {B} z)(1+\overline {C} z)}.$$ For $ |\beta|\leq1$ the function $\displaystyle \phi(z)=\frac{z+\beta}{1+\overline{\beta}z}$ maps $\bar E=\{z: |z|\leq1\},$ onto $\bar E$. So, to prove that $|\omega_3|<1$ in $E$, it suffices to show that $|A|\leq 1$, $|B|\leq 1$ and $|C|\leq1$.
 We take $t\in (0,{1}/{2})\cup({1}/{2},1) ,$ as the cases when $t=0$ or $t=1$ are trivial and the case when $t={1}/{2}$ will be dealt separately. As $|a_0|=2t-1<1=|a_3|,$ therefore, by applying Cohn's rule on $\gamma$, it is sufficient to show that all zeros of $\gamma_1$ lie inside or on $|z|=1,$ where,\\ $\indent\hspace{.2cm}\displaystyle \gamma_1(z)= \frac{a_3\gamma(z)-a_0\gamma^*(z)}{z}$
\begin{equation} \indent\hspace{-5.6cm}=4t(1-t)\left[z^2-(\alpha_1+\alpha_2)z-(\alpha_1-\alpha_2-1)\right]
 \end{equation}
 $\indent\hspace{1cm}=b_2z^2+b_1z+b_0$. \\Now, if $\alpha_1=1$ and $\alpha_2=-1$, then both the zeros of $\gamma_1$ lie on the circle $|z|=1$ and otherwise, if $\alpha_1-\alpha_2> 0,$ we have $|b_0|<|b_2|$ because $\alpha_1,\alpha_2\in [-1,1]$ and $4t(1-t)\not=0$ for $t\in (0,{1}/{2})\cup({1}/{2},1)$.  Again, by applying Cohn's rule on $\gamma_1,$ we need to show that all zeros of $\gamma_2$ lie inside or on $|z|=1,$ where\\
 $\displaystyle \gamma_2(z)= \frac{b_2\gamma_1(z)-b_0\gamma^*_1(z)}{z}\\
 \indent\hspace{.9cm}=(4t(1-t))^2(\alpha_1-\alpha_2)[(2-\alpha_1+\alpha_2)z-(\alpha_1+\alpha_2)]$ and $\gamma^*_1(z)=\displaystyle z^2\overline{\gamma_1\left(\frac{1}{\overline{z}}\right)}.$\\
  If $z_2$ is the zero of  $\gamma_2$ then $|z_2|\leq 1$ is equivalent to $(1-\alpha_1)(1+\alpha_2)\geq0$ which is true as $|\alpha_i|\leq1$ for $i=1,2.$ Hence zeros of $\gamma_1\,$ and $\gamma$ both lie in or on the unit circle $|z|=1$. \\ In case $t={1}/{2}$ we observe that$$ \gamma(z)=z[z^2-(\alpha_1+\alpha_2)z-(\alpha_1-\alpha_2-1)].$$ In view of (7) we can easily verify that all the zeros of $\gamma$ lie in or on the unit circle $|z|=1$. Hence the result is proved.
  \end{proof}
Images of $E$ under $f_{\alpha_{1}}$, $f_{\alpha_{2}}$ and $f$ are shown in Figure 1, Figure 2 and Figure 3, respectively.

\begin{figure}[ht]
  \begin{minipage}[b]{0.55\textwidth}
    \centering
    \includegraphics[width=.55\textwidth]{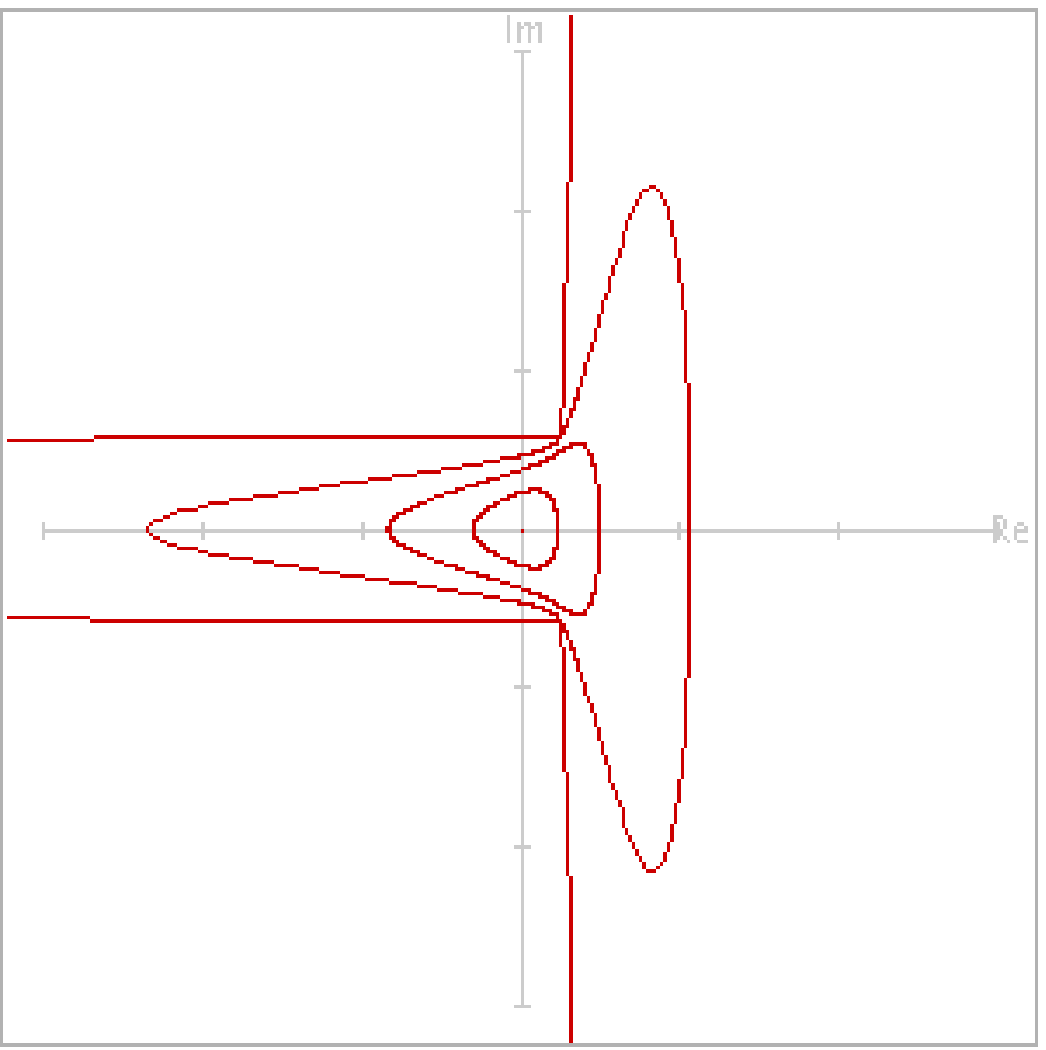}
    \vspace{-0.2cm}\caption{Image of $E$ under $f_{\alpha_{1}}$ for $\alpha_1=0.5$}
  \end{minipage}
\begin{minipage}[b]{0.55\textwidth}
\centering
\includegraphics[width=.55\textwidth]{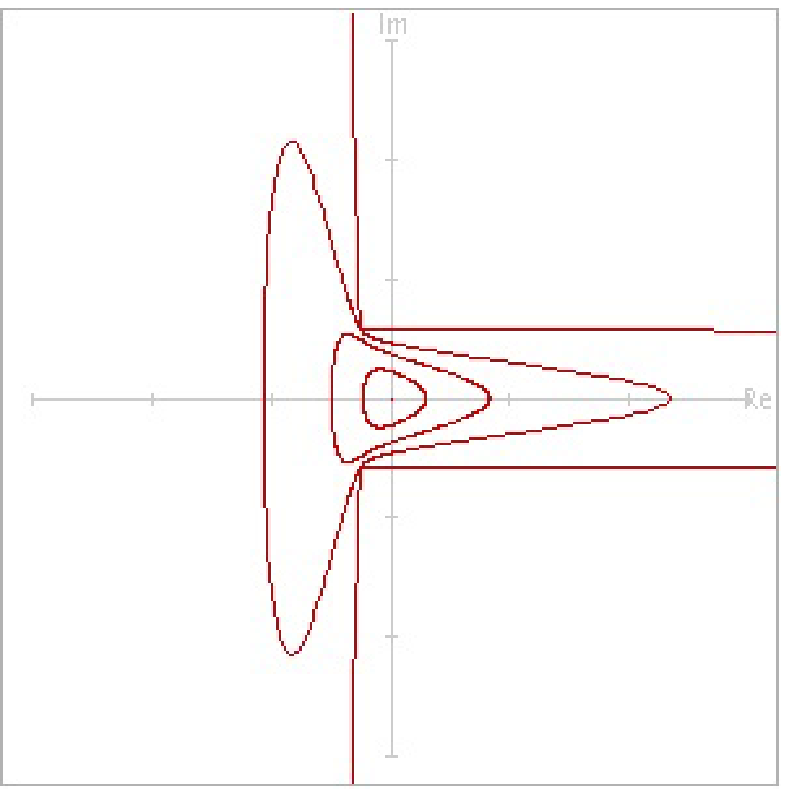}
\caption{Image of $E$ under $f_{\alpha_{2}}$ for $\alpha_2=-0.5$}
\end{minipage}
\end{figure}
\begin{figure}[ht]
\centering
\includegraphics[width=0.40\textwidth]{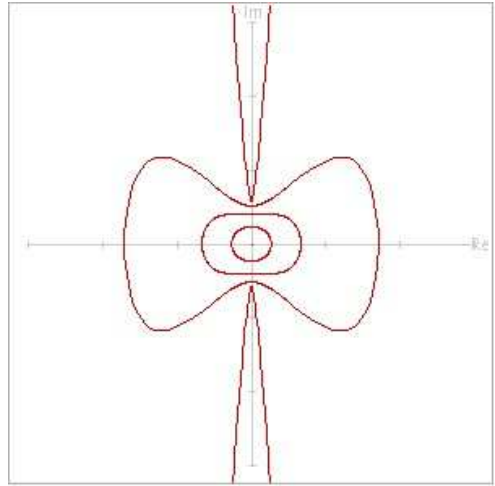}
 \caption{Image of $E$ under $f$, for $\alpha_1=0.5$, $\alpha_2=-0.5$ and $t=\frac{1}{4}$.}
\end{figure}
\begin{remark} Note that in Theorem 2.7 it is not possible to take $\alpha_2>\alpha_1$ because in that case it will then follow from (7) that the  modulus of the product of zeros of $\gamma_1$ is $|1+\alpha_2-\alpha_1|$ which is strictly greater than 1. Hence at least one zero of $\gamma_1$ and therefore of $\gamma$ shall lie outside $|z|=1$ implying that there will exist some $z\in E$ for which $|\omega(z)|\not<1$ i.e, linear combination of $f_{\alpha_{1}}$ and $f_{\alpha_{2}}$ shall no longer remain locally univalent and sense-preserving.
\end{remark}

\begin{theorem} Let $f_{\alpha_{1}}$ be same as in Theorem 2.7 and $f_{\alpha_{2}}=h_{\alpha_{2}}+\overline{g}_{\alpha_{2}} \in S_H,$ with dilatation $\omega_2$, where $h_{\alpha_{2}}(z)+g_{\alpha_{2}}(z)=\displaystyle {z(1-\alpha_2 z)}/{(1-z^2)},\,\alpha_2 \in [-1,1].$ Let $f=t f_{\alpha_{1}}+(1-t)f_{\alpha_{2}},\,0\leq t\leq 1.$ Then we have the following:\\
(i) If $\omega_2(z)=-z^2$ and $\alpha_1\geq\alpha_2$, then $f$ is in $S_H$ and is convex in the direction of the imaginary axis. \\
(ii) If $\omega_2(z)=z^2$,  $|\alpha_1|\geq|\alpha_2|$ and  $\alpha_1\alpha_2\geq0,$ then $f$ is in $S_H$ and is convex in the direction of the imaginary axis.
\end{theorem}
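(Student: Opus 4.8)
The plan is to follow the pattern of the proof of Theorem 2.7. By Theorem 2.1 it is enough to show that $f$ is locally univalent and sense-preserving, i.e.\ that its dilatation $\omega$ satisfies $|\omega|<1$ in $E$. In part (i) I would substitute $\omega_1(z)=-z$, $\omega_2(z)=-z^2$ into (5), and in part (ii) substitute $\omega_1(z)=-z$, $\omega_2(z)=z^2$; in both cases a factor $z$ cancels from numerator and denominator and one is left with $\omega(z)=\pm z\,\gamma(z)/\gamma^*(z)$, where $\gamma$ is a polynomial of degree $4$ with real coefficients and nonzero constant term (equal to $\mp t$, hence nonzero for $t\neq0$) and $\gamma^*(z)=z^4\,\overline{\gamma(1/\overline z)}$. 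Just as in Theorem 2.7, the nonzero zeros of $\gamma^*$ are the reciprocal conjugates of those of $\gamma$, so $\omega(z)=\pm z\prod_{k=1}^{4}(z+A_k)/(1+\overline{A}_k z)$; since $z\mapsto(z+\beta)/(1+\overline\beta z)$ carries $\overline E$ into $\overline E$ whenever $|\beta|\le1$, it suffices to prove that all four zeros of $\gamma$ lie in $\overline E$.

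To locate the zeros of $\gamma$ I would apply Cohn's Rule (Lemma 2.6) three times. The cases $t=0$ and $t=1$ are trivial, and the case $\alpha_1=\alpha_2$ follows from Theorem 2.3, so assume $0<t<1$ and $\alpha_1\neq\alpha_2$. After checking $|a_0|<|a_4|$ (which is just $t<1$), Cohn's Rule reduces $\gamma$ to a cubic $\gamma_1$; a second application (after checking the corresponding hypothesis) reduces $\gamma_1$ to a quadratic $\gamma_2$, and a third reduces it to a linear $\gamma_3$, whose single zero can be written down explicitly. If the three reductions are all legitimate, then the zero of $\gamma_3$ lying in $\overline E$ forces all of $\gamma$'s zeros into $\overline E$; so the proof comes down to (a) verifying $|a_0|<|a_n|$ at each reduction, and (b) checking that the zero of $\gamma_3$ has modulus $\le1$.

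Steps (a) and (b) are where the hypotheses are used. I expect that in part (i) the hypothesis $\alpha_1>\alpha_2$ is exactly what makes the first reduction legitimate (the relevant inequality being $2t(\alpha_1-\alpha_2)>0$), while the remaining requirements collapse, after squaring, to expressions like $(1-\alpha_1)(1+\alpha_2)\ge0$ that hold automatically because $|\alpha_i|\le1$. In part (ii) I expect the two hypotheses $|\alpha_1|\ge|\alpha_2|$ and $\alpha_1\alpha_2\ge0$ to be needed jointly — the sign condition to keep intermediate constant terms dominated in modulus by the leading terms, and $|\alpha_1|\ge|\alpha_2|$ to place the zero of $\gamma_3$ in $\overline E$ — with the argument split into the sub-cases $\alpha_1,\alpha_2\ge0$ and $\alpha_1,\alpha_2\le0$. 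A handful of boundary configurations (such as $\alpha_1=1,\alpha_2=-1$ in part (i), or $\alpha_1=1,\alpha_2=0$ in part (ii)) will make an intermediate $|a_0|=|a_n|$, so that Cohn's Rule does not apply; these I would treat separately by factoring the low-degree polynomial by hand and observing directly that its zeros lie on or inside $|z|=1$, exactly as is done for $\alpha_1=1,\alpha_2=-1$ in the proof of Theorem 2.7.

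The hard part is really the bookkeeping: the quartic $\gamma$ forces three Cohn reductions instead of the one or two needed in Theorem 2.7, and its coefficients carry all three parameters $\alpha_1,\alpha_2,t$, so the difficulty lies in organizing the algebra so the required inequalities emerge transparently and in not overlooking any degenerate sub-case (an intermediate vanishing leading coefficient, or an intermediate tie $|a_0|=|a_n|$). In part (ii) the genuinely delicate point is seeing precisely how $|\alpha_1|\ge|\alpha_2|$ and $\alpha_1\alpha_2\ge0$ combine, since the estimates behave differently according to the common sign of $\alpha_1$ and $\alpha_2$.
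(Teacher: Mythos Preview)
Your proposal is correct and is exactly the approach the paper intends: the paper's own proof of this theorem is omitted entirely with the remark that it ``runs on the same lines as that of Theorem~2.7,'' and you have accurately spelled out what that entails (substitute the new dilatations into (5), pull the factor $z$ out of the numerator to obtain $\omega(z)=\pm z\,\gamma(z)/\gamma^*(z)$ with $\gamma$ now quartic, and iterate Cohn's Rule, with the stated hypotheses on $\alpha_1,\alpha_2$ supplying the needed inequalities and boundary cases handled by direct factoring). One small wording fix: the factor $z$ does not cancel between numerator and denominator---it factors out of the numerator alone (the denominator has constant term $1$), exactly as in Theorem~2.7.
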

 As the proof runs on the same lines as that of Theorem 2.7, hence is omitted.

\begin{remark} If we take $\omega_2(z)=z^3$ in the above theorem, then we observe that $f$ may not be locally univalent and sense-preserving. For $t={3}/{4}$ if we set $\alpha_1=0.4$ and $\alpha_2=0.3$ or $\alpha_1=0.3$ and $\alpha_2=0.6$ , it can be easily verified that $|\omega_3|\not<1$ in $E$.  \end{remark}

\begin{remark} We observe that proceeding on the same lines as in Theorem 2 of Wang et al. [\ref{wa and  li}], one can easily get the following:\\
  Let $\displaystyle f_{\alpha_{i}}=h_{\alpha_{i}}+\overline{g}_{\alpha_{i}}$ be in $S_H$ such that $h_{\alpha_{i}}(z)+g_{\alpha_{i}}(z)=\displaystyle {z(1-\alpha_{i} z)}/{(1-z^2)},\,\alpha_{i} \in [-1,1]$ for $i=1,2$ and let $\displaystyle \omega_{i}={g_i'}/{h_i'},\,i=1,2,$ be dilatation functions of $f_{\alpha_{i}},\, i=1,2,$ respectively. Then $f=t f_{\alpha_{1}}+(1-t)f_{\alpha_{2}},$ $0\leq t\leq1 $, is in $S_H$ and convex in the direction of the imaginary axis if $\Re\left((1-\omega_1\overline{\omega_2})h_1'\overline{h_2'}\right)>0.$ \end{remark}

We close this paper by considering one of the harmonic functions involved in the linear combination obtained by shearing of analytic strip mapping (3).

  \begin{theorem} Let $f_\theta=h_\theta+\overline{g}_\theta\in S_H$ where $\displaystyle  h_\theta(z)+g_\theta(z)= \frac{1}{2i\,\sin\theta} \log\left(\frac{1+ze^{i\theta}}{1+ze^{-i\theta}}\right),$ $\theta\in(0,\pi)$  and $f_\alpha=h_\alpha+\overline{g}_\alpha\in S_H$ be the map such that $h_\alpha(z)+g_\alpha(z)=\displaystyle {z(1-\alpha z)}/{(1-z^2)},\,\,\alpha \in [-1,1].$ Then  $f_{\theta,\alpha}=tf_\theta+(1-t)f_\alpha,\,0\leq t\leq 1,$ is in $S_H$ and is convex in the direction of the imaginary axis provided $f_{\theta,\alpha}$ is locally univalent and sense-preserving.  \end{theorem}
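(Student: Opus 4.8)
The plan is to mimic the proof of Theorem 2.1: it suffices, by Lemma 1.2 applied with $\phi=\pi/2$ (so that $h-e^{2i\phi}g=h+g$), to show that the analytic function $F_{\theta,\alpha}:=h_{\theta,\alpha}+g_{\theta,\alpha}=tF_\theta+(1-t)F_\alpha$, where $F_\theta=h_\theta+g_\theta$ and $F_\alpha=h_\alpha+g_\alpha$, is univalent and convex in the direction of the imaginary axis; the assumed local univalence and sense-preservation of $f_{\theta,\alpha}$ then close the argument through Lemma 1.2. Since we already know $\Re[(1-z^2)F'_\alpha(z)]>0$ on $E$ from $(4)$ and the functional $f\mapsto\Re[(1-z^2)f'(z)]$ is linear, by Lemma 1.1 everything reduces to the single inequality
$$\Re\bigl[(1-z^2)F'_\theta(z)\bigr]\ge 0,\qquad z\in E.$$

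To establish it I would first differentiate $F_\theta$. Writing $F_\theta(z)=\frac{1}{2i\sin\theta}\bigl[\log(1+ze^{i\theta})-\log(1+ze^{-i\theta})\bigr]$, a direct calculation shows the bracketed derivative equals $2i\sin\theta/[(1+ze^{i\theta})(1+ze^{-i\theta})]$, the prefactor cancels, and one obtains the clean formula
$$F'_\theta(z)=\frac{1}{(1+ze^{i\theta})(1+ze^{-i\theta})}=\frac{1}{1+2z\cos\theta+z^2};$$
in particular $F'_\theta\ne 0$ on $E$, so $F_\theta$ is analytic and non-constant and Lemma 1.1 is applicable. Next I would multiply $(1-z^2)F'_\theta(z)$ above and below by $\overline{1+2z\cos\theta+z^2}$ and expand the numerator; the purely imaginary contribution $\bar z^2-z^2$ drops out on taking real parts, leaving
$$\Re\bigl[(1-z^2)F'_\theta(z)\bigr]=\frac{(1-|z|^2)\bigl(1+|z|^2+2\cos\theta\,\Re z\bigr)}{\bigl|1+2z\cos\theta+z^2\bigr|^2}.$$
Because $|\cos\theta|\le 1$, the bracket in the numerator is bounded below by $1+|z|^2-2|z|=(1-|z|)^2\ge 0$, so the expression is non-negative (indeed strictly positive) for $z\in E$.

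Combining the two estimates, $\Re[(1-z^2)F'_{\theta,\alpha}(z)]=t\,\Re[(1-z^2)F'_\theta(z)]+(1-t)\,\Re[(1-z^2)F'_\alpha(z)]>0$ on $E$ for every $t\in[0,1]$, so by Lemma 1.1 the analytic function $F_{\theta,\alpha}=h_{\theta,\alpha}+g_{\theta,\alpha}$ is univalent and convex in the direction of the imaginary axis; since $f_{\theta,\alpha}=h_{\theta,\alpha}+\overline{g}_{\theta,\alpha}$ is locally univalent and sense-preserving by hypothesis, Lemma 1.2 then yields $f_{\theta,\alpha}\in S_H$ and convex in the direction of the imaginary axis. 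I expect no genuine obstacle here: the skeleton is the same as in Theorem 2.1, and the only computation of substance is the real-part identity for $F'_\theta$, where the one thing to watch is that merely $|\cos\theta|\le 1$ (not the precise value of $\theta\in(0,\pi)$, which only ensures $\sin\theta\ne 0$ and hence that the logarithmic expression and the formula for $F'_\theta$ make sense) is what the lower bound $(1-|z|)^2$ requires.
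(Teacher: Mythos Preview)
Your proof is correct and follows the same overall skeleton as the paper: reduce via Lemma~1.2 and the linearity in the proof of Theorem~2.1, together with~(4), to the single inequality $\Re[(1-z^2)F'_\theta(z)]>0$ on $E$. The only difference is in how that inequality is established. You compute the real part explicitly, obtaining
\[
\Re\bigl[(1-z^2)F'_\theta(z)\bigr]=\frac{(1-|z|^2)\bigl(1+|z|^2+2\cos\theta\,\Re z\bigr)}{|1+2z\cos\theta+z^2|^2},
\]
and bound the second numerator factor below by $(1-|z|)^2$; this mirrors exactly what the paper does for $F_\alpha$ in~(4). The paper instead sets $\phi(z)=(1-z^2)F'_\theta(z)$, observes $\phi(0)=1$ and $\Re\phi(e^{i\gamma})=0$ for all real $\gamma$, and invokes the minimum principle for harmonic functions. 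Your direct computation is entirely self-contained and arguably cleaner here, since it avoids any issue with the boundary behaviour of $\phi$ near the poles $-e^{\pm i\theta}$; the paper's minimum-principle argument is shorter to write but tacitly needs that $\Re\phi$ extends continuously (with value $0$) to the rest of $\partial E$.
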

\begin{proof} In view of Theorem 2.1 and (4), we need only to show that $\Re\left[(1-z^2)F_\theta'(z)\right]>0,$ where $F_\theta=h_\theta+g_\theta$. Let $$\displaystyle \phi(z)=(1-z^2)F_\theta'(z)=\frac{1-z^2}{(1+ze^{i\theta})(1+ze^{-i\theta})}.$$ Since $\phi(0)=1$ and for each $\gamma\in \mathbb{R}$, $\Re[\phi(e^{i\gamma})]=0$, therefore, by minimum principle for harmonic functions, we have, $\Re[\phi(z)]=\Re\left[(1-z^2)F_\theta'(z)\right]>0,$ for  $z\in E.$ Hence we have our result.\end{proof}
 The following example illustrates the result of above theorem.
\begin{example} Let $f_\theta=h_1+\overline{g}_1 \in S_H$ be the harmonic map considered in Theorem 2.12 with $\theta={\pi}/{2}$ and $\displaystyle \omega_1(z)={g_1'(z)}/{h_1'(z)}=-z.$ Take $f_\alpha=h_\alpha+\overline{g}_\alpha \in S_H$ such that $\displaystyle h_\alpha(z)+g_\alpha(z)={z}/{(1-z)}$ and $\displaystyle \omega_2(z)={g_2'(z)}/{h_2'(z)}=z^2.$ By shearing we get,\\ $\displaystyle h_1(z)=\frac{1}{2}\tan^{-1}z-\frac{1}{2}\log(1-z)+\frac{1}{4}\log(1+z^2),$\qquad
$\displaystyle g_1(z)=\frac{1}{2}\tan^{-1}z+\frac{1}{2}\log(1-z)-\frac{1}{4}\log(1+z^2);$ \\and \\$\displaystyle h_2(z)=\frac{z}{2(1-z)}-\frac{1}{2}\log(1-z)+\frac{1}{4}\log(1+z^2)$,\qquad $\displaystyle g_2(z)=\frac{z}{2(1-z)}+\frac{1}{2}\log(1-z)-\frac{1}{4}\log(1+z^2)$. \\

Now if $\omega$ is the dilatation of $f_{\theta,\alpha}=t f_\theta+(1-t)f_\alpha,\,0\leq t\leq1,$ then, $$\displaystyle \left|\omega\right|=\left|\frac{t g_1'+(1-t)g_2'}{t h_1'+(1-t) h_2'}\right|=\left|\frac{t \omega_1 h_1'+(1-t)\omega_2 h_2'}{t h_1'+(1-t) h_2'}\right|=\left|\frac{z(z-t)}{1-t z}\right|<1.$$ This implies that $f_{\theta,\alpha}$ is locally univalent and sense-preserving in $E$. So, in view of Theorem 2.12, $f_{\theta,\alpha} \in S_H$ and is convex in the direction of the imaginary axis. \\ Images of $E$ under $f_\theta$, $f_\alpha$ and $f_{\theta,\alpha}$ are shown in Figure 4, Figure 5 and Figure 6, respectively.
\end{example}
\begin{figure}[ht]
\begin{minipage}[b]{0.5\textwidth}
    \centering
    \includegraphics[width=.5\textwidth]{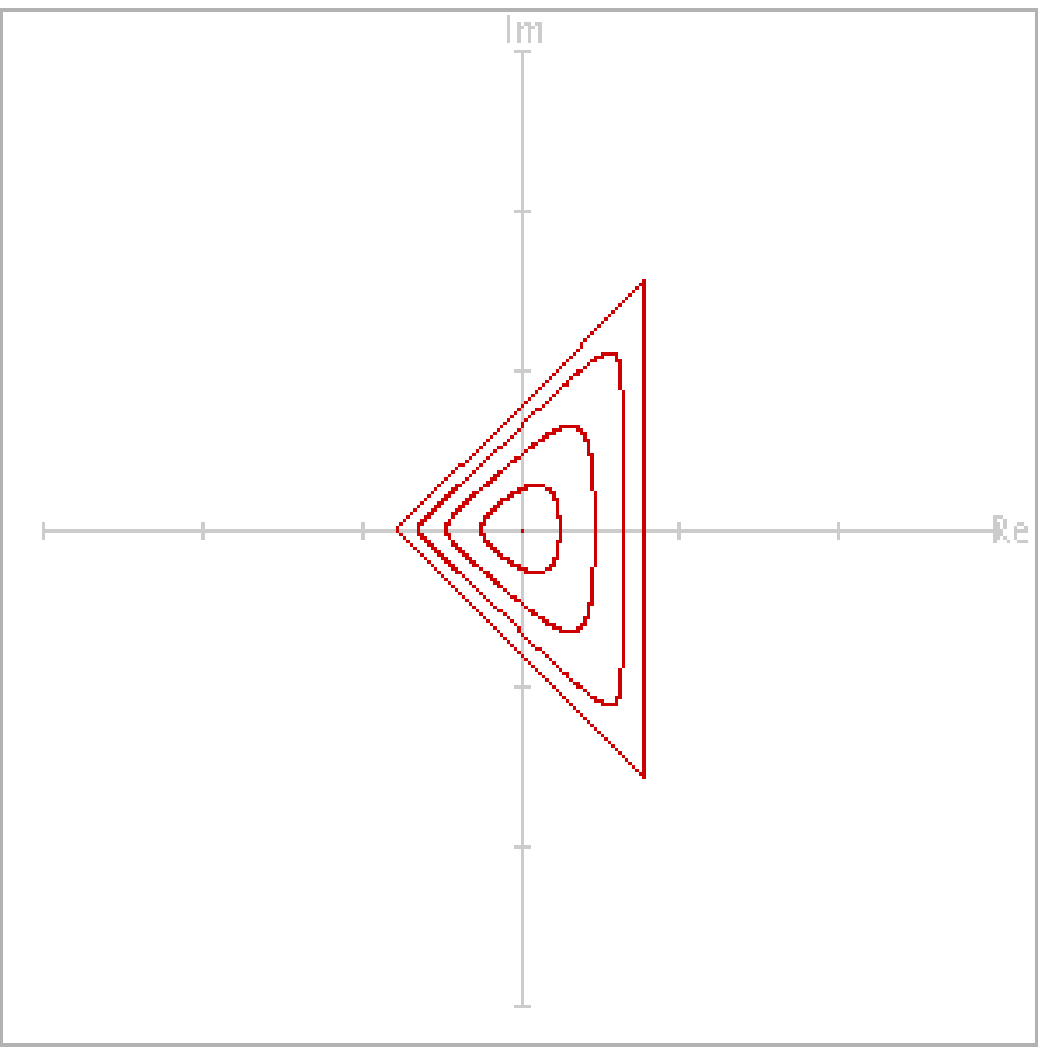}
    \caption{Image of $E$ under $f_\theta$ for $\theta=\frac{\pi}{2}$}
  \end{minipage}
  \hspace{.2cm}
  \begin{minipage}[b]{0.5\textwidth}
    \centering
    \includegraphics[width=.5\textwidth]{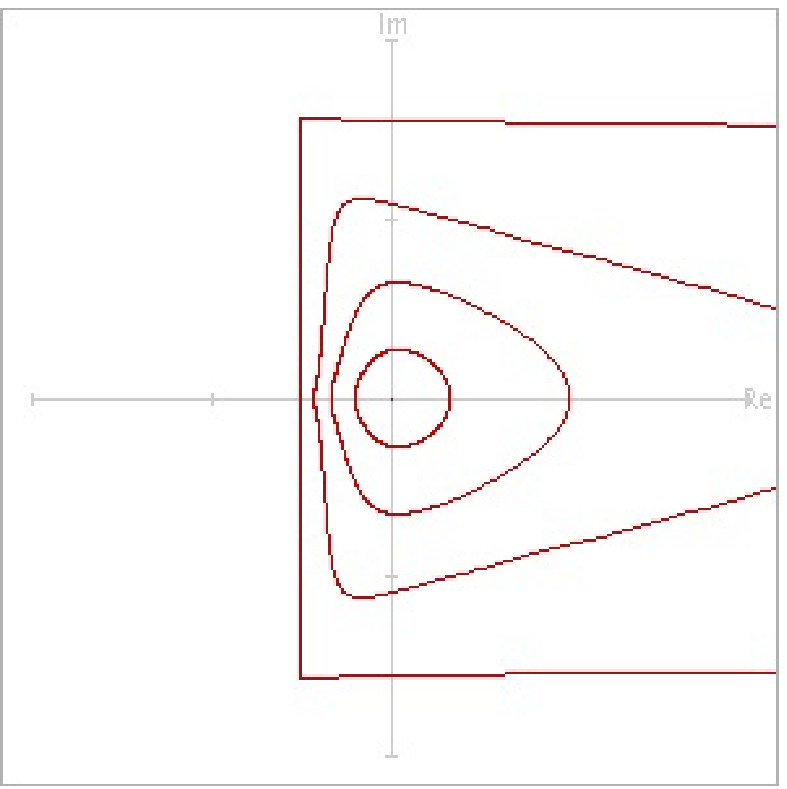}
    \caption{Image of $E$ under $f_\alpha$ for $\alpha=-1$}
  \end{minipage}
\end{figure}
\begin{figure}[ht]
\centering
\includegraphics[width=0.35\textwidth]{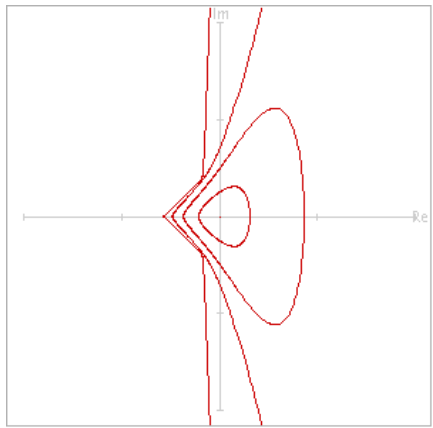}
\vspace{-0.2cm} \caption{Image of $E$ under $f_{\theta,\alpha}$ for $\theta=\frac{\pi}{2}$, $\alpha=-1$ and $t=\frac{3}{4}.$}
\end{figure}

\noindent{\emph{Acknowledgement: First author is thankful to the Council of Scientific and Industrial Research, New Delhi, for financial support vide grant no. 09/797/0006/2010 EMR-1.}}
{

\vspace{1cm}
{\footnotesize
\noindent{Department of Mathematics, \\Sant Longowal Institute of Engineering and Technology, \\Longowal-148106 (Punjab), India.}}


\begin{thebibliography}{999}
{\setlength{\baselineskip}
{0.6\baselineskip}
\addcontentsline{toc}{chapter}{References}

\bibitem{.} J.Clunie and Sheil-Small, {Harmonic univalent functions}, \emph{{Ann. Acad. Sci. Fenn. Ser. A I Math.}} {\bf9} (1984), 3-25. \label{cl and sh}
\bibitem{.} M. Dorff, {Anamorphosis, mapping problems and harmonic univalent function}, Explorations in complex analysis, 197-269, \emph{{Math. Assoc. of America, Inc., Washington, DC}}, 2012.\label{do 2}
\bibitem{.} W. Hengartner and G. Schober, {On schlicht mappings to domains convex in one direction}, \emph{{Comment. Math. Helv.}} \textbf{45} (1970), 303-314. \label{he and sc}
\bibitem{.} A.Michalski, {Harmonic univalent functions convex in orthogonal direction}, \emph{{Annales universitatis mariae curie-sklodowska lublin-polonia}} Vol. LX, 2006, 43-56.\label{mi}
\bibitem {.}Q.I.Rahman and G. Schmeisser, {Analytic theory of polynomials}, \emph{{London Mathematical Society Monographs New Series} } \textbf{26}, Oxford University Press, Oxford, 2002.\label{ra and sc}
\bibitem  {.} Z.-G. Wang, Z.-H. Liu and Y.-C. Li, {On the linear combinations of harmonic univalent mappings}, \emph{{J. Math. Anal. Appl.}} Vol. {\bf 400}(2) (2013), 452-459. \label{wa and  li}

}


\end{thebibliography}
\end{document}